\documentclass[a4paper,russian,english]{article}
\usepackage{amsmath, amsfonts, amssymb}
\usepackage[cp1251]{inputenc}
\usepackage[T2A]{fontenc}
\usepackage{babel,indentfirst}
\usepackage{graphicx,graphics,verbatim}
\usepackage{makeidx,graphicx}
\usepackage{amsthm,amsmath,amscd}   % ?????????????? ?????????? ?? AMS
\usepackage{amsfonts,amssymb}       % ?????????????? ?????????? ?? AMS
\usepackage{mathrsfs}
\usepackage{cite}  
\usepackage{xcolor}
\usepackage{lipsum}
\usepackage[colorlinks=true,linkcolor=teal,citecolor=blue,filecolor=magenta,urlcolor=cyan]{hyperref}
\usepackage{indentfirst} % ??????? ?????? ????? ?????????
\usepackage{geometry} % ?????? ???? ????????
\newtheorem{theorem}{Theorem}
\newtheorem{corollary}{Corollary}
\newtheorem{lemma}{Lemma}

\begin{document}
	
		\begin{center}
		\textbf {\Large Construction of Reproducing Kernel Hilbert Spaces\\
			Associated with the Laplace--Bessel Operator}
	\end{center}	
	\vspace{0pt plus1fill} %????? ????? fill = ????????? ???????????? ?????????? ?????????? fill, ??????? ???????? ????????? ?????? ?????
		\begin{center}
		{\large {\bf  E. L. Shishkina}}
	\end{center}

	\begin{center}  
 E-mail: ilina\_dicor@mail.ru\\

{Department of Mathematical and Applied Analysis, Voronezh State University, Voronezh,  Universitetskaya pl. 1, 394018 Russia}

{Department of Applied Mathematics and Computer Modeling, Belgorod State National Research University (BelGU), Pobedy St., 85, Belgorod, 308015, Russia}

{Institute of Mathematics, Physics and Information Technology, Kadyrov Chechen State University, A. Sheripova st., 32, Grozny, 364024, Russia}

{International Laboratory of Stochastic Analysis and its Applications, National Research University Higher School of Economics, Pokrovsky Bulvar, 11, Moscow, 109028, Russia}
 	\end{center}

{\bf Abstract:} {\small 
Motivated by applications to the study of  problems for partial differential equations with Bessel operator, we
introduce a new  reproducing kernel
Hilbert spaces (RKHS) which is fractional weighted Sobolev space
with  positive definite kernel. As an application, we  obtain a simple and practical approximate solution to the singular Poisson equation.}

{\bf Keywords:}  {\small reproducing kernel Hilbert space, generalized Bessel potential space, Laplace--Bessel operator, singular Poisson equation}

{\bf 2010 Mathematics Subject Classification:} \texttt{46E22, 35J05}

\section{Introduction}

The theory of reproducing kernels, was popularized by N.~Aronszajn \cite{Aronszajn}, has found numerous applications in analysis, approximation theory, and inverse problems. In particular, Saitoh and his collaborators developed a systematic method for constructing approximate solutions to various inverse problems using RKHS and Tikhonov regularization \cite{Saitoh1997,SaitohSawano,ByunSaitoh,Matsuura}.

  We  use generalised Bessel potential spaces which are generalised fractional Sobolev spaces defined via the Hankel multiplier $(1+|\xi|^2)^{-\frac{\alpha}{2}}$. The Bessel potential spaces introduced by N.~Aronszajn and K.T.~Smith in \cite{Aronszajn001} have become fundamental objects in harmonic analysis and PDE theory. The space of generalized Bessel potentials
constructed using the Hankel transform, was first introduced by L.N.~Lyakhov and M.V. Polovinkina in \cite{LyakhovPolovinkina} via the Stein--Lizorkin approach. In that work, the norm in the space of generalized Bessel potentials  was constructed using the $B$-hypersingular integrals and Riesz $B$-potentials previously introduced by L.N.~Lyakhov in \cite{LyakhovHyp,LyakhovRiesz}. In the present paper, we adopt a different approach to defining the norm, based on the work of A.V.~Balakrishnan \cite{Balakrishnan}.

In this paper we prove that generalized Bessel potential space is a reproducing kernel Hilbert space  adapted to the Laplace--Bessel operator 
$$
\Delta_\gamma=\sum\limits_{i=1}^{n} \frac{\partial ^{2} }{\partial x_{i}^{2} } +\frac{\gamma _{i} }{x_{i} } \frac{\partial}{\partial x_i}.
$$
Next, we prove that the operator $\Delta_\gamma$ is bounded from the generalized Bessel potential space to the weighted Lebesgue space if and only if the smoothness parameter is at least two. Third, we derive an explicit formula for the regularized reproducing kernel, which is expressed in terms of the original reproducing kernel of the generalized Bessel potential space. Finally, we apply the theorems from \cite{Matsuura} to obtain a simple and practical approximate solution to the singular Poisson equation $\Delta_\gamma u{\,=\,}g$ and prove that the regularized solution converges uniformly to the exact solution as the regularization parameter tends to zero.

\section{Definitions}

\subsection{Laplace--Bessel operator and spaces}

The Bessel operator of the form
$$
(B_{\nu})_{t}=\frac{d^2}{dt^2}+\frac{\nu}{t}\frac{d}{dt},\qquad t>0,\qquad \nu>0,
$$
arises as the radial part of the Laplace operator when considering the Euclidean distance, as part of the wave operator in the context of the Lorentz distance, and also in various applied problems.

The Laplace--Bessel operator is given by $\Delta_\gamma{\,=\,}\sum\limits_{i=1}^{n} (B_{\gamma_i})_{x_i}.$

When considering the action of the Laplace--Bessel operator $\Delta_\gamma$ on functions of several variables, it is usually assumed that these variables cannot take negative values. We therefore work in the $n$-dimensional Euclidean space $\mathbb{R}^n$ and its open orthant
$$
\mathbb{R}^n_+=\{x=(x_1,\ldots,x_n)\in\mathbb{R}^n,\,\,\, x_1>0,\ldots, x_n>0\}.
$$
All functions considered below are extended to negative values of $x_i$, $i=1,\ldots,n$, by even reflection.

Let
$\gamma=(\gamma_1,\ldots,\gamma_{n})$ be a multi-index consisting of positive real numbers $\gamma_i>0$, $i=1,\ldots,n$, and let $|\gamma|=\gamma_1+\cdots+\gamma_{n}$.

The weighted Lebesgue space $L^{p}_\gamma(\mathbb{R}^{n}_+)$ consists of measurable functions $f$ on $\mathbb{R}^{n}_+$ with finite norm
$$
\|f\|_{p,\gamma}=\left(\,\int\limits_{\mathbb{R}^{n}_+}|f(x)|^{p}x^\gamma\,dx\right)^{1/p},\qquad 1\le p<\infty,
$$
where $ x^\gamma{\,=\,}\prod\limits_{i=1}^n x_i^{\gamma_i}$.

The space $L_\gamma^2(\mathbb{R}_+^n)$ is a Hilbert space consisting of functions on $\mathbb{R}_+$, with the inner product given by
$$
\langle u, v \rangle_{L_\gamma^2(\mathbb{R}_+^n)}=\langle u, v \rangle_\gamma
=
\int\limits_{\mathbb{R}_+^n}  u(x)v(x)x^\gamma\,dx.
$$ 
This inner product we will call  $L^2_\gamma$-inner product or weighted inner product.

Let $\Omega$ be a finite or infinite open set in $\mathbb{R}^n$ that is symmetric with respect to each hyperplane $x_i = 0$, $i = 1, \dots, n$. Let
$
\Omega_+{\,=\,} \Omega \cap \mathbb{R}^n_+
$
and
$
\overline{\Omega}_+{\,=\,}\Omega \cap \overline{\mathbb{R}}\,\!^n_+,
$
where
$$
\overline{\mathbb{R}}\,^n_+ = \{ x = (x_1, \dots, x_n) \in \mathbb{R}^n : x_1 \geq 0, \dots, x_n \geq 0 \}.
$$
We deal with the class $C^m(\Omega_+)$ consisting of functions that are $m$ times differentiable on $\Omega_+$, and denote by $C^m(\overline{\Omega}_+)$ the subset of functions from $C^m(\Omega_+)$ such that all derivatives of these functions with respect to $x_i$ for any $i=1,\dots,n$ are continuous up to $x_i=0$.

The class $C^m_{ev}(\overline{\Omega}_+)$ consists of all functions from $C^m(\overline{\Omega}_+)$ such that
$$
\frac{\partial^{2k+1}f}{\partial x_i^{2k+1}}\bigg|_{x_i=0}=0
$$
for all non-negative integers $k \leq \frac{m-1}{2}$ (see \cite{Kipr}, p. 21).

In what follows, we will denote $C^m_{ev}(\overline{\mathbb{R}}\,\!^n_+)$ by $C^m_{ev}$.

We set
$$
C^\infty_{ev}(\overline{\Omega}_+)=\bigcap C^m_{ev}(\overline{\Omega}_+),
$$
where the intersection is taken over all finite $m$, and write $C^\infty_{ev}(\overline{\mathbb{R}}_+)=C^\infty_{ev}$.

As the space of basic functions we will use  the subspace of the space of rapidly decreasing functions:
$$
S_{ev}=\left\{f\in C^\infty_{ev}:\sup _{{x\in {\mathbb{R}}^{n}_+}}\left|x^{\alpha }D^{\beta }f(x)\right|<\infty \quad \forall \alpha ,\beta \in \mathbb {Z} _{+}^{n}\right\},
$$
where $\alpha=(\alpha_1,...,\alpha_n)$, $\beta=(\beta_1,...,\beta_n)$, $\alpha_1,...,\alpha_n,\beta_1,...,\beta_n$ are integer non-negative numbers, 
$x^\alpha{\,=\,}x_1^{\alpha_1} x_2^{\alpha_2} \ldots x_n^{\alpha_n}$,
${D}^\beta={D}^{\beta_1}_{x_1}...{D}^{\beta_n}_{x_n}$, ${D}_{x_j}=\frac{\partial}{\partial x_j}$.

The space of weighted generalized functions $S_{ev}'(\mathbb{R}^n_+){\,=\,}S_{ev}'$ is a class of continuous linear functionals that map a set of test functions  $f\in  S_{ev}$ into the set of real numbers. 
Each function $u(x){\,\in\,}L_{1,loc}^\gamma$ will be identified with the functional $u{\,\in\,}S_{ev}'(\mathbb{R}^n_+){\,=\,}S_{ev}'$
acting according to the formula
\begin{equation}\label{RegDist}
	\langle u,f\rangle_\gamma=\int\limits_{\mathbb{R}^n_+} u(x)\,f(x)\,x^\gamma\, dx,\qquad f\in  S_{ev}.
\end{equation}
Generalized functions $u\in S_{ev}'$ acting by the formula \eqref{RegDist} will be called  regular weighted generalized functions.
All other generalized functions  $u\in S_{ev}'$ will be called  singular weighted generalized functions.

The weighted delta-function $\delta_\gamma \in S'_{ev}$ is defined by the equality 
$$
\langle \delta_\gamma,\varphi\rangle_\gamma = \varphi(0), \qquad \varphi(x) \in S_{ev}.
$$
The action  of this generalized function can be understood as follows.
Let
$$
\omega_{\varepsilon}(x)=
\begin{cases}
	C_\varepsilon\, e^{-\frac{\varepsilon^2}{\varepsilon^2-|x|^2}}, & |x| \leq \varepsilon, \\[4pt]
	0, & |x| > \varepsilon,
\end{cases}
$$
where the constant $C_\varepsilon$ is chosen so that
$$
\int\limits_{\mathbb{R}^n_+} \omega_{\varepsilon}(x) \, x^\gamma \, dx = 1.
$$
Since
$$
\lim_{\varepsilon \to +0} \int\limits_{\mathbb{R}^n_+} \omega_{\varepsilon}(x) \, \varphi(x) \, x^\gamma \, dx = \varphi(0), \qquad \varphi \in S_{ev},
$$
we have
$$
\langle \omega_{\varepsilon}, \varphi\rangle_\gamma \to \langle\delta_{\gamma}, \varphi\rangle_\gamma, \qquad \varepsilon \to +0, \qquad \varphi \in S_{ev}.
$$
For convenience, we will write
$$
\langle \delta_\gamma, \varphi\rangle_\gamma = \int\limits_{\mathbb{R}^n_+} \delta_\gamma(x) \, \varphi(x) \, x^\gamma \, dx = \varphi(0),
$$
with the understanding that this expression is to be interpreted as the limit of a delta-shaped sequence.

Next we give the definition of a reproducing kernel Hilbert space from \cite{SaitohSawano}.

Let $E$ denote an arbitrary, non-empty abstract set, and let $\mathcal{F}(E)$ be the set of all complex-valued functions defined on $E$. A {\bf reproducing kernel Hilbert space} (abbreviated as RKHS) over $E$ consists of a Hilbert space $\mathcal{H} \subseteq \mathcal{F}(E)$ together with a function $K{\,:\,}E{\,\times\,}E {\,\to\,}\mathcal{H} $, termed the {\bf reproducing kernel}, which satisfies the following two conditions:
\begin{enumerate}
	\item For every $x \in E$, the function $K(\cdot, x)$ belongs to $\mathcal{H}$.
	\item For every $x \in E$ and every $f \in \mathcal{H}$, the evaluation at $x$ is given by the inner product
	$$
	f(x) = \langle f(x), K(\cdot, x) \rangle_{\mathcal{H}}.
	$$
\end{enumerate}

The Hilbert space $\mathcal{H}$ is denoted by $\mathcal{H}_K(E)$ (or simply $\mathcal{H}_K$) when it is associated with its reproducing kernel $K$. 
The correspondence of the reproducing kernel $K$ and the reproducing 	kernel Hilbert space $\mathcal{H}_K(E)$ is one-to-one.

\subsection{Multidimensional Hankel transform and Multidimensional generalized translation}

The Bessel function of the first kind $J_\alpha(x)$ for non-integer $\alpha$ is defined by the power series \cite{Watson}
\begin{equation}\label{J1}
	J_\alpha(x) = \sum\limits_{m=0}^\infty \frac{(-1)^m}{m!\, \Gamma(m+\alpha+1)} {\left({\frac{x}{2}}\right)}^{2m+\alpha}.
\end{equation}
For integer $\alpha$, the definition is obtained by taking the limit and applying L'H\^opital's rule.

The modified Bessel function of the first kind $I_{\alpha }(x)$ is defined by \cite{Watson}
\begin{equation}\label{Mod1}
	I_{\alpha }(x)=i^{-\alpha }J_{\alpha }(ix)=\sum _{m=0}^{\infty }{\frac {1}{m!\,\Gamma (m+\alpha +1)}}\left({\frac {x}{2}}\right)^{2m+\alpha }.
\end{equation}

The normalized Bessel function of the first kind $j_\nu$ is given by (see \cite{Kipr}, \cite{levitan})
$$
	j_\nu(x) =\frac{2^\nu\Gamma(\nu+1)}{x^\nu}\,\,J_\nu(x),
$$
where $J_\nu$ is the Bessel function of the first kind \eqref{J1}.
For $\nu=\frac{\gamma-1}{2}$, $\gamma>0$, we have
\begin{equation}\label{FBess1}
	j_\frac{\gamma-1}{2}(x) =\frac{2^\frac{\gamma-1}{2}\Gamma\left(\frac{\gamma+1}{2}\right)}{x^\frac{\gamma-1}{2}}\,\,J_\frac{\gamma-1}{2}(x).
\end{equation}

The normalized modified Bessel function of the first kind $i_\nu$ is defined by
\begin{equation}\label{FBess2}
	i_\nu(x) =\frac{2^\nu\Gamma(\nu+1)}{x^\nu}\,\,I_\nu(x),
\end{equation}
where $I_\nu$ is the modified Bessel function of the first kind \eqref{Mod1}.

Let $\gamma=(\gamma_1,\ldots,\gamma_n)$ with $\gamma_1>0,\ldots,\gamma_n>0$, and define
\begin{equation}\label{Pr1}
	\mathbf{j}_\gamma(x,\xi)=\prod\limits_{k=1}^n j_{\frac{\gamma_k-1}{2}}(x_k\xi_k),
\end{equation}
\begin{equation}\label{Pr2}
	\mathbf{i}_\gamma(x,\xi)=\prod\limits_{k=1}^n i_{\frac{\gamma_k-1}{2}}(x_k\xi_k),
\end{equation}
where the functions $j_\nu$ and $i_\nu$ are given by \eqref{FBess1} and \eqref{FBess2}, respectively.

It is known (see \cite{ShS}), that
	\begin{equation}\label{DelBes}
\Delta_\gamma \mathbf{j}_\gamma(x,\xi)=-|\xi|^2\mathbf{j}_\gamma(x,\xi)
\end{equation}
and
	\begin{equation}\label{JBes}
	\int\limits_{S_1^+(n)}\mathbf{j}_\gamma(r\theta,\xi) \theta^\gamma\:dS=\frac{\prod\limits^n_{i=1}\Gamma\left(\frac{\gamma_i+1}{2}\right)}{2^{n-1}\Gamma\left(\frac{n+|\gamma|}{2}\right)}\, j_{\frac{n+|\gamma|}{2}-1}(r|\xi|),
\end{equation}
where $\theta^\gamma{=}\prod\limits_{i=1}^{n}\theta_i^{\gamma_i},$ $S^+_1(n){=}\{\theta{:}|\theta|{=}1,\theta{\in}\mathbb{R}^n_+\}$ is a part of a sphere in
$\mathbb{R}^n_+$.

The eigenfunctions of $\Delta_\gamma$ are products of normalized Bessel functions \eqref{Pr1}, \eqref{Pr2}. Functions \eqref{Pr1} are used to define the multidimensional Hankel transform. The use of these eigenfunctions has enabled the development of a new harmonic analysis associated with the operator $\Delta_\gamma$.

The multidimensional Hankel transform of a function $f\in L^1_\gamma(\mathbb{R}_+^n)$ is defined by
$$
\mathbf{F}_\gamma[f](\xi)=(\mathbf{F}_\gamma)_x[f(x)](\xi)=\widehat{f}(\xi)=\frac{2^{\frac{n-|\gamma|}{2}}}{\prod\limits_{j=1}^n\,
	\Gamma\left(\frac{\gamma_j+1}{2}\right)}\int\limits_{\mathbb{R}^n_+}f(x)\,\mathbf{j}_\gamma(x;\xi)x^\gamma dx.
$$

The subspace of the Schwartz space $S(\mathbb{R}^{n})$ consisting of functions that are even in each variable will be denoted by $S_{ev}(\mathbb{R}^{n}_+)$.
For $f{\,\in\,}S_{ev}(\mathbb{R}_+^n)$, the inverse multidimensional Hankel transform is given by
$$
\mathbf{F}^{-1}_\gamma[\widehat{f}(\xi)](x)=f(x)=\frac{2^{\frac{n-|\gamma|}{2}}}{\prod\limits_{j=1}^n\,
	\Gamma\left(\frac{\gamma_j+1}{2}\right)}\int\limits_{\mathbb{R}^n_+}
\mathbf{j}_\gamma(x,\xi)\widehat{f}(\xi)\xi^\gamma\:d\xi.
$$

 The  multidimensional Hankel transform defined this way is also its own inverse: $\mathbf{F}_\gamma{\,=\,}\mathbf{F}^{-1}_\gamma$.

For functions $f, g \in L^2_\gamma(\mathbb{R}_+^n)$, the Parseval's identity states that
\begin{equation}\label{Parseval}
\int\limits_{\mathbb{R}_+^n} f(x) \, \overline{g(x)} \, x^\gamma \, dx
	=
	\int\limits_{\mathbb{R}_+^n} \mathbf{F}_\gamma[f](\xi) \, \overline{\mathbf{F}_\gamma[g](\xi)} \, \xi^\gamma \, d\xi,
\end{equation}
where $\overline{g}$ denotes the complex conjugate of $g$.

Taking $f{\,=\,}g$ in \eqref{Parseval} yields the Plancherel's identity:
\begin{equation}\label{Plancherel}
	\int\limits_{\mathbb{R}_+^n} |f(x)|^2 \, x^\gamma \, dx
	=
	\int\limits_{\mathbb{R}_+^n} |\mathbf{F}_\gamma[f](\xi)|^2 \, \xi^\gamma \, d\xi
\end{equation}
or equivalently,
$$
	\|f\|_{L^2_\gamma(\mathbb{R}_+^n)} = \|\mathbf{F}_\gamma[f]\|_{L^2_\gamma(\mathbb{R}_+^n)}.
$$
Therefore, the multidimensional Hankel transform $\mathbf{F}_\gamma$ is a unitary operator on $L^2_\gamma(\mathbb{R}_+^n)$,
preversing the weighted $L^2_\gamma$-inner product.

	For the Laplace--Bessel operator, the Hankel transform gives
\begin{equation}\label{LBH}
\mathbf{F}_\gamma[\Delta_\gamma f](\xi) = -|\xi|^2 \mathbf{F}_\gamma[f](\xi).
\end{equation}

	If $g\in S_{ev}'$ then equality
$$
	\langle\mathbf{F}_\gamma g,\varphi\rangle_\gamma=\langle g,\mathbf{F}_\gamma\varphi\rangle_\gamma,\qquad\varphi\in S_{ev}
$$
defines  Hankel transform of
functional $g\in S_{ev}'$.

%\subsection{Multidimensional generalized translation}

The generalized translation operator $\,^\gamma T^y_x$ associated with the Bessel operator is defined by
\begin{equation}\label{Sdvog011}
	(\,^\gamma T^y_xf)(x)=\,^\gamma T^y_xf(x)=\frac{\Gamma\left(\frac{\gamma+1}{2}\right)}{\sqrt{\pi}\,\,\Gamma\left(\frac{\gamma}{2}\right)}\int\limits_0^\pi
	f(\sqrt{x^2+y^2-2xy\cos{\varphi}})\sin^{\gamma-1}{\varphi}\,d\varphi.
\end{equation}

The multidimensional generalized translation is defined by
$$
	(^\gamma \mathbf{T}^y_x f)(x)=\,^\gamma \mathbf{T}^y_x f(x)=(\,^{\gamma_1}T_{x_1}^{y_1}{\cdots}\,^{\gamma_n}T_{x_n}^{y_n}f)(x),
$$
where each of the generalized shifts $\,^{\gamma_i}T_{x_i}^{y_i}$, $i=1,\ldots,n$, is given by \eqref{Sdvog011}.

The next equation represents a basic property of the multidimensional generalized translation
$$
	\,^\gamma \mathbf{T}^y_x\, \mathbf{j}_\gamma(x;\xi)=\mathbf{j}_\gamma(x;\xi)\mathbf{j}_\gamma(y;\xi).
$$

	Hankel transform from generalized translation of function $f{\in}S_{ev}(\mathbb{R}_+)$ has a form
\begin{equation}\label{HanSdv}
	\mathbf{F}_\gamma[ \,^\gamma \mathbf{T}^y_xf(x)](\xi)=\mathbf{j}_{\gamma}(y,\xi)\,\mathbf{F}_\gamma[f](\xi).
\end{equation}

The generalized convolution associated with the multidimensional generalized translation $\,^\gamma \mathbf{T}^y_x$ is defined by
\begin{equation}\label{Conv}
	(f*g)_\gamma(x)=(f*g)_\gamma=\int\limits_{\mathbb{R}^n_+}f(y)(^\gamma\mathbf{T}^y_xg)(x)y^\gamma\:dy.
\end{equation}

For $f,g\in S_{ev}(\mathbb{R}_+^n)$, the multidimensional Hankel transform of the generalized convolution \eqref{Conv} satisfies
\begin{equation}\label{fursver1}
	\mathbf{F}_\gamma[(f*g)_\gamma(x)](\xi)=\mathbf{F}_\gamma[f(x)](\xi)\mathbf{F}_\gamma[g(x)](\xi).
\end{equation}

	It is clear that the Hilbert space $L^{p}_\gamma(\mathbb{R}^{n}_+)$ is  not an RKHS because the  function $(\,^\gamma \mathbf{T}^y_x\delta_\gamma)(x)$ which has the
	reproducing property
$$
\langle \,^\gamma \mathbf{T}^y\delta_\gamma, \varphi\rangle_\gamma = \int\limits_{\mathbb{R}^n_+} (\,^\gamma \mathbf{T}^y_x\delta_\gamma)(x) \, \varphi(x) \, x^\gamma \, dx = \varphi(y),
$$
	does not satisfy the square integrable condition with weight $x^\gamma$.

\section{Semigroup Approach to Generalised Bessel Potential Operator}
	
	\subsection{Singular Rescaled Semigroup}
	
The semigroup generated by the operator $((\Delta_\gamma)_{x}-cI)$, with  $c{\,>\,0}$ 
is referred to as a {\bf singular rescaled  semigroup} (or exponentially damped semigroup). An explicit construction can be obtained by combining the fundamental solution of the singular heat equation with a scalar multiplication operator.

	Let  $g(x)$   be a uniformly bounded, continuous function defined on 
 $\mathbb{R}^n_+$, $c{\,\in\,\mathbb{R}}$.	We need to focus on solving the following problem for the {\bf singular diffusion equation with absorption} with $u{\,=\,}u(x,t)$, $x{\,\in\,}\mathbb{R}^n_+$, $t{\,\geq\,}0$. 
		\begin{equation}\label{ZCauch01}
		\begin{cases}
			\dfrac{\partial u}{\partial t} = (\Delta_\gamma)_{x} u - c u, & x \in \mathbb{R}^n_+, \quad t > 0; \\[3pt]
			u(x,0) = g(x), &  x \in \mathbb{R}^n_+; \\[3pt]
			\left.\dfrac{\partial u}{\partial x_i}\right|_{x_i=0}=0,& i=1,...,n.
		\end{cases}
	\end{equation}	
	We notice that if $v{\,=\,}v(x,t)$ is a solution of the above equation in \eqref{ZCauch01} with $c{\,=\,} 0$ (i.e., $v$ solves the singular heat equation), then setting
	$$
	u(x,t) = e^{-ct} v(x,t)
	$$
	we find that $v$ has the same initial condition as $u$, and
	$$
\dfrac{\partial u}{\partial t}  = -c e^{-ct}v  + e^{-ct}\dfrac{\partial v}{\partial t} , \qquad
	(\Delta_\gamma)_{x} u  = e^{-ct} (\Delta_\gamma)_{x} v.
	$$
	Therefore,
	$$
\dfrac{\partial u}{\partial t} - (\Delta_\gamma)_{x} u + c u
	= c e^{-ct}v + e^{-ct}\dfrac{\partial v}{\partial t} - e^{-ct}a^2(\Delta_\gamma)_{x} v + c e^{-ct} v
	= 0,
$$
	by virtue of the fact that 
	$$
\dfrac{\partial v}{\partial t}=(\Delta_\gamma)_{x} v.
$$
	
	However, we already know the formula for $v$ (see \cite{ZhVMSh}):
	$$
	v(x,t)=\frac{t^{-\frac{n+\lvert\gamma\rvert }{2}}}{2^{\lvert\gamma\rvert }\prod\limits_{i=1}^n{\Gamma\left(\frac{\gamma_i{+}1}{2}\right)}}\int\limits_{\mathbb{R}^n_+}{e^{-\frac{\lvert y\rvert ^2}{4t}}} (^\gamma\mathbf{T}^y_x\varphi)(x)y^\gamma dy.
	$$
	The function $v(x,t)$ thus defined is continuous and bounded for all $t{\,\geq\,} 0$.
	Therefore, the continuous and bounded function
	$$
	u(x,t) = \frac{e^{-ct}t^{-\frac{n+\lvert\gamma\rvert }{2}}}{2^{\lvert\gamma\rvert }\prod\limits_{i=1}^n{\Gamma\left(\frac{\gamma_i{+}1}{2}\right)}}\int\limits_{\mathbb{R}^n_+}{e^{-\frac{\lvert y\rvert ^2}{4t}}} (^\gamma\mathbf{T}^y_x\varphi)(x)y^\gamma dy.
	$$
	solves the desired  problem \eqref{ZCauch01}.

For a   function $\varphi{\,\in\,}L^p_\gamma(\mathbb{R}^n_+)$, $1\leq p<\infty$, singular rescaled  semigroup $\{S^\gamma_t\}_{t\geq 0}$ applied to $g$ at time $t{\,>\,}0$ is explicitly given as:
	$$
	(S^\gamma_t \varphi)(x)=\frac{e^{-ct}t^{-\frac{n+|\gamma| }{2}}}{2^{|\gamma| }\prod\limits_{i=1}^n{\Gamma\left(\frac{\gamma_i{+}1}{2}\right)}}\int\limits_{\mathbb{R}^n_+}{e^{-\frac{|y|^2}{4t}}} (^\gamma\mathbf{T}^y_x\varphi)(x)y^\gamma dy.
	$$

From \cite{ZhVMSh} follows that for $c{\,>\,}0$
the operators $\{S^\gamma_t\}_{t\geq 0}$ form a $C_0$-semigroup of contractions on $L^\gamma_p(\mathbb{R}^n_+)$ for $1\leq p<\infty$. Its generator $A$
 coincides with the closure of the operator $((\Delta_\gamma)_{x}-cI)$ defined on the space of even Schwartz functions $S_{ev}(\mathbb{R}^n_+)$.
	
Since (see \cite{AlSh})
$$
\,^\gamma \mathbf{T}^y_x e^{-\frac{| x|^2}{4 a^2 t}}=e^{-\frac{| x|^2+| y|^2}{4a^2t}}\mathbf{i}_{\gamma}\left(x,\frac{y}{2a^2t}\right),
$$
where $\mathbf{i}_\gamma$ is defined by equality \eqref{Pr2}, then
the operator $S^\gamma_t$ can be written in the form:
$$
(S^\gamma_t\varphi)(x)=\frac{e^{-ct}t^{-\frac{n+|\gamma| }{2}}}{2^{|\gamma| }\prod\limits_{i=1}^n{\Gamma\left(\frac{\gamma_i{+}1}{2}\right)}}\int\limits_{\mathbb{R}^n_+} \varphi(y)e^{-\frac{| x|^2+| y|^2}{4t}}\mathbf{i}_{\gamma}\left(x,\frac{y}{2t}\right) y^\gamma dy.
$$

The   norm of $S^\gamma_t$ in The norm decays exponentially as decays exponentially as
		$$
	||S^\gamma_t g||_{p,\gamma}\leq e^{-ct}||g||_{p,\gamma}.
	$$

	In the spatial domain, the action of the semigroup $S^\gamma_t$ on a function $g$ is given by the generalised 
	convolution of $g$ with a scaled Gaussian kernel, depending on $\gamma$, modulated by the exponential decay
$$
	W_{\gamma,c}(x,t)=\frac{1}{2^{|\gamma| }\prod\limits_{i=1}^n{\Gamma\left(\frac{\gamma_i{+}1}{2}\right)}}
	\left\{ \begin{array}{ll}
	t^{-\frac{n+|\gamma|}{2}}{e^{-\frac{|x|^2}{4t}-ct}}, & \mbox{if $t > 0$},\\
		0, & \mbox{if $t \leq 0$}.
\end{array} \right.
$$
Namely,
\begin{equation}\label{ConvS}
(S^\gamma_t\varphi)(x)=(W_\gamma*\varphi)_\gamma(x)=\int\limits_{\mathbb{R}^n_+}W_\gamma(y,t) (^\gamma\mathbf{T}^y_x\varphi)(x)y^\gamma dy.
\end{equation}

{\bf Properties of $W_\gamma(x,t)$.}

\begin{enumerate}
	\item The Hankel transform of the function $W_\gamma(x,t)$ with respect to $x\in\mathbb{R}^n_+$ and $t>0$ has the form (see \cite{AlSh})
	\begin{equation}\label{HT}
		(\mathbf{F}_\gamma)_x[W_\gamma(x,t)](\xi,t)=e^{-t(c+\lvert\xi\rvert ^2)}.
	\end{equation}
	\item The integral over the orthant of $W_\gamma(x,t)$ gives $e^{-ct}$
$$
		\int\limits_{\mathbb{R}^{n}_+}W_\gamma(x,t)x^\gamma dx=e^{-ct}.
$$
\end{enumerate}

Formulas \eqref{ConvS} and \eqref{HT} allow us to write the following representation of
 $S^\gamma_t$ when $\varphi\in S_{ev}(\mathbb{R}^n_+)$ 
\begin{equation}\label{SdExp}
(S^\gamma_t\varphi)(x)=\frac{2^{n-|\gamma|}}{\prod\limits_{j=1}^n\,
	\Gamma^2\left(\frac{\gamma_j{+}1}{2}\right)}\int\limits_{\mathbb{R}^n_+}[\mathbf{F}_\gamma\varphi](\xi) e^{-t(c+|\xi|^2)}\mathbf{j}_\gamma(x,\xi) \xi^\gamma\:d\xi=[\mathbf{F}_\gamma^{-1}([\mathbf{F}_\gamma\varphi](\xi) e^{-t(c+|\xi|^2)})](x).
\end{equation}

\subsection{Construction of the Fractional Power of the $(cI-\Delta_\gamma)$}

The semigroup method is a general technique that allows one to formulate and analyze the fundamental aspects of fractional powers of operators and their properties in appropriate function spaces. In this section, we demonstrate how this method works in the particular case of defining the fractional Laplace--Bessel operator. Our starting point is the Balakrishnan formula 
\begin{equation}\label{BF03}
	(-A)^{-\alpha} u=\frac{1}{\Gamma(\alpha)}\int\limits_0^\infty t^{\alpha-1}T_tudt,\qquad \alpha>0,
\end{equation}
which expresses the negative fractional power of an operator in terms of the semigroup it generates.

The idea of introducing the negative fractional power of the  operator  $(cI-\Delta_\gamma)$ is transparent in the Hankel transform domain:
$$
	(cI-\Delta_\gamma)^{-\frac{\alpha}{2}}\varphi=\mathbf{F}_\gamma^{-1}(c+|x|^2)^{-\frac{\alpha}{2}} \mathbf{F}_\gamma\varphi,\qquad \alpha>0.
$$

\begin{theorem}
 Let $\varphi \in S_{ev}(\mathbb{R}^n_+)$, $c{\,>\,}0$. 
	For $0{\,<\,}\alpha$, the negative fractional power of the operator $(cI-\Delta_\gamma)$ has the form
	\begin{equation}\label{FrL01}
		(cI-\Delta_\gamma)^{-\frac{\alpha}{2}} \varphi(x)
			=\frac{2^{\frac{n-|\gamma|-\alpha}{2}+1} c^{\frac{n+|\gamma|- \alpha}{4}} }{\Gamma\left(\frac{\alpha}{2}\right)\prod\limits_{i=1}^n{\Gamma\left(\frac{\gamma_i{+}1}{2}\right)}}  \int\limits_{\mathbb{R}^n_+}|y|^{\frac{\alpha-n-|\gamma|}{2}}K_{\frac{n+|\gamma|-\alpha}{2}}(\sqrt{c}|y|)(^\gamma\mathbf{T}^y_x\varphi)(x)y^\gamma dy.
	\end{equation}
\end{theorem}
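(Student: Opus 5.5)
The plan is to apply the Balakrishnan formula \eqref{BF03} with $A=(\Delta_\gamma)_x-cI$ and $T_t=S^\gamma_t$, the singular rescaled semigroup. The formula is justified directly on the Hankel side rather than by appeal to abstract semigroup theory. For $\varphi\in S_{ev}(\mathbb{R}^n_+)$, representation \eqref{SdExp} expresses $S^\gamma_t\varphi$ as $\mathbf{F}_\gamma^{-1}\big([\mathbf{F}_\gamma\varphi](\xi)e^{-t(c+|\xi|^2)}\big)$. Multiply by $t^{\alpha/2-1}/\Gamma(\alpha/2)$ and integrate in $t$. Fubini applies because $\mathbf{F}_\gamma\varphi$ is rapidly decreasing, $|\mathbf{j}_\gamma|\le 1$, and $c>0$. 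The elementary identity
$$
\int_0^\infty t^{\frac{\alpha}{2}-1}e^{-t(c+|\xi|^2)}\,dt=\Gamma\left(\tfrac{\alpha}{2}\right)(c+|\xi|^2)^{-\frac{\alpha}{2}}
$$
then gives
$$
\frac{1}{\Gamma(\frac{\alpha}{2})}\int_0^\infty t^{\frac{\alpha}{2}-1}S^\gamma_t\varphi\,dt=\mathbf{F}_\gamma^{-1}(c+|\xi|^2)^{-\frac{\alpha}{2}}\mathbf{F}_\gamma\varphi=(cI-\Delta_\gamma)^{-\frac{\alpha}{2}}\varphi .
$$

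Next, substitute the spatial representation \eqref{ConvS}, $S^\gamma_t\varphi=\int W_\gamma(y,t)(^\gamma\mathbf{T}^y_x\varphi)(x)y^\gamma dy$, and interchange the $t$- and $y$-integrations. This reduces everything to the scalar integral
$$
\int_0^\infty t^{\frac{\alpha-n-|\gamma|}{2}-1}e^{-\frac{|y|^2}{4t}-ct}\,dt .
$$
Evaluate it with the standard formula
$$
\int_0^\infty t^{\nu-1}e^{-a/t-bt}\,dt=2\left(\tfrac{a}{b}\right)^{\nu/2}K_\nu\big(2\sqrt{ab}\big),
$$
taking $\nu=\frac{\alpha-n-|\gamma|}{2}$, $a=|y|^2/4$ and $b=c$. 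This yields $2^{1-\nu}c^{-\nu/2}|y|^{\nu}K_{\nu}(\sqrt c|y|)$. Since $K_{\nu}=K_{-\nu}$, this equals $2^{1+\frac{n+|\gamma|-\alpha}{2}}c^{\frac{n+|\gamma|-\alpha}{4}}|y|^{\frac{\alpha-n-|\gamma|}{2}}K_{\frac{n+|\gamma|-\alpha}{2}}(\sqrt c|y|)$. Combining this with the prefactor $\big(2^{|\gamma|}\Gamma(\frac{\alpha}{2})\prod_i\Gamma(\frac{\gamma_i+1}{2})\big)^{-1}$ gives exactly the constant $2^{\frac{n-|\gamma|-\alpha}{2}+1}$ in \eqref{FrL01}.

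The main obstacle is justifying the interchange of integrals, i.e.\ absolute convergence of the double integral over $(y,t)$. I would use $|(^\gamma\mathbf{T}^y_x\varphi)(x)|\le\sup|\varphi|$, which holds because the translation is an average with positive weights. By Tonelli it then suffices that the resulting nonnegative kernel $|y|^{\nu}K_{|\nu|}(\sqrt c|y|)$ is integrable against $y^\gamma dy$ on $\mathbb{R}^n_+$.

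At infinity this is clear, since $K$ decays exponentially. Near $0$ I split into cases.
\begin{itemize}
\item If $\nu<0$, then $K_{|\nu|}(r)\sim C r^{-|\nu|}$, so the kernel behaves like $|y|^{\alpha-n-|\gamma|}$. This is integrable near $0$ against $y^\gamma dy$, whose homogeneity is $n+|\gamma|$, precisely because $\alpha>0$.
\item If $\nu=0$, the kernel has a logarithmic singularity, which is integrable.
\item If $\nu>0$, the kernel is bounded near $0$.
\end{itemize}
With this bound, Fubini closes the argument.
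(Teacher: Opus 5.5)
Your proposal is correct and follows the paper's proof. Both first identify the Balakrishnan integral with the multiplier $(c+|\xi|^2)^{-\frac{\alpha}{2}}$ via \eqref{SdExp}, then pass to the spatial form \eqref{ConvS}, swap the $t$- and $y$-integrations, and evaluate the $t$-integral with the same Macdonald-function formula; your constant bookkeeping, including $K_\nu=K_{-\nu}$, reproduces \eqref{FrL01} exactly.

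Your only addition is the justification of the spatial interchange, which the paper performs without comment. It can be shortened: $|{}^\gamma\mathbf{T}^y_x\varphi|\le\sup|\varphi|$ together with $\int_{\mathbb{R}^n_+}W_\gamma(y,t)\,y^\gamma dy=e^{-ct}$ bounds the absolute double integral by $\Gamma(\frac{\alpha}{2})\,c^{-\frac{\alpha}{2}}\sup|\varphi|$, so no case analysis in $\nu$ is needed.
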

\begin{proof}
 Let $\alpha{\,>\,}0$. Applying the formulas \eqref{BF03} and \eqref{SdExp}, we obtain
\begin{multline*}	
	(cI-\Delta_\gamma)^{-\frac{\alpha}{2}} \varphi(x)=\frac{1}{\Gamma\left(\frac{\alpha}{2}\right) }
	\int\limits_0^\infty t^{\frac{\alpha}{2}-1}(S^\gamma_t\varphi)(x)dt=\frac{1}{\Gamma\left(\frac{\alpha}{2}\right) }
	\int\limits_0^\infty t^{\frac{\alpha}{2}-1}([\mathbf{F}_\gamma^{-1}([\mathbf{F}_\gamma\varphi](\xi) e^{-t(c+\lvert\xi\rvert^2)})](x)dt=\\
	=\frac{1}{\Gamma\left(\frac{\alpha}{2}\right) }\frac{2^{\frac{n-|\gamma|}{2}}}{\prod\limits_{j=1}^n\
		\Gamma\left(\frac{\gamma_j{+}1}{2}\right)}\int\limits_0^\infty t^{\frac{\alpha}{2}-1}\left(\,\,\int\limits_{\mathbb{R}^n_+}e^{-t(c+\lvert\xi\rvert^2)}  [\mathbf{F}_\gamma\varphi](\xi)\mathbf{j}_\gamma(x,\xi) \xi^\gamma\:d\xi\right)dt=\\
		=\frac{1}{\Gamma\left(\frac{\alpha}{2}\right) }\frac{2^{\frac{n-|\gamma|}{2}}}{\prod\limits_{j=1}^n\
			\Gamma\left(\frac{\gamma_j{+}1}{2}\right)}\int\limits_{\mathbb{R}^n_+}\left(\,\,\int\limits_0^\infty t^{\frac{\alpha}{2}-1}e^{-t(c+\lvert\xi\rvert^2)}dt\right)  [\mathbf{F}_\gamma\varphi](\xi)\mathbf{j}_\gamma(x,\xi) \xi^\gamma\:d\xi=\\
				=\frac{2^{\frac{n-|\gamma|}{2}}}{\prod\limits_{j=1}^n\
				\Gamma\left(\frac{\gamma_j{+}1}{2}\right)}\int\limits_{\mathbb{R}^n_+} \left(c+|\xi|^2\right)^{-\frac{\alpha}{2}}  [\mathbf{F}_\gamma\varphi](\xi)\mathbf{j}_\gamma(x,\xi) \xi^\gamma\:d\xi\leq\\
						\leq \frac{2^{\frac{n-|\gamma|}{2}}}{\prod\limits_{j=1}^n\
							\Gamma\left(\frac{\gamma_j{+}1}{2}\right)}\int\limits_{\mathbb{R}^n_+} \left(c+|\xi|^2\right)^{-\frac{\alpha}{2}}  |[\mathbf{F}_\gamma\varphi](\xi)|\cdot |\mathbf{j}_\gamma(x,\xi)| \xi^\gamma\:d\xi<\infty.
\end{multline*}		
Thus, the integral defining the operator $(cI-\Delta_\gamma)^\alpha $ converges for $0{\,<\,}\alpha$.

 Applying \eqref{BF03} and \eqref{ConvS}, we obtain
\begin{multline*} 
	(cI-\Delta_\gamma)^{-\frac{\alpha}{2}} \varphi(x)=\frac{1}{\Gamma\left(\frac{\alpha}{2}\right)}\int\limits_0^\infty t^{\frac{\alpha}{2}-1} (S^\gamma_t\varphi)(x)dt
	=\frac{1}{\Gamma\left(\frac{\alpha}{2}\right) }\int\limits_0^\infty t^{\frac{\alpha}{2}-1}  \left(\,\,\int\limits_{\mathbb{R}^n_+}W_\gamma(y,t) (^\gamma\mathbf{T}^y_x\varphi)(x)y^\gamma dy \right) dt=\\
	=\frac{1}{\Gamma\left(\frac{\alpha}{2}\right) }\int\limits_{\mathbb{R}^n_+}\left(\,\, \int\limits_0^\infty t^{\frac{\alpha}{2}-1} W_\gamma(y,t)dt \right) (^\gamma\mathbf{T}^y_x\varphi)(x)y^\gamma dy.
\end{multline*}		
Let us calculate the integral by $t$ using  formula 2.3.16.1 from \cite{IR1}  of the form
$$
\int\limits_0^\infty x^{\beta-1}e^{-px-\frac{q}{x}}dx=2\left(\frac{q}{p} \right)^{\frac{\beta}{2}} K_{\beta}(2\sqrt{pq}).
$$
We obtain
\begin{multline*} 
 \int\limits_0^\infty t^{\frac{\alpha}{2}-1} W_\gamma(y,t)dt=\frac{1}{2^{|\gamma| }\prod\limits_{i=1}^n{\Gamma\left(\frac{\gamma_i{+}1}{2}\right)}} \int\limits_0^\infty  t^{\frac{\alpha}{2}-\frac{n+|\gamma| }{2}-1}{e^{-\frac{|y|^2}{4t}-ct}}dt=\\
 =\frac{1}{2^{|\gamma|-1} \prod\limits_{i=1}^n{\Gamma\left(\frac{\gamma_i{+}1}{2}\right)}}
 \left(\frac{|y|}{2\sqrt{c}} \right)^{\frac{\alpha-n-|\gamma|}{2}} K_{\frac{\alpha-n-|\gamma|}{2}}(\sqrt{c}|y|),
\end{multline*}	
 that gives \eqref{FrL01}.
\end{proof}

For $c{\,=\,}1$ operator \eqref{FrL01} coincides with the {\bf generalized Bessel potential} studied in  \cite{ShishkinaEkinciogluKeskin,Dzhabrailov}.
Explicit inversion formulas of Balakrishnan--Rubin type and a characterization of Bessel potentials associated with the Laplace--Bessel differential operator
$$
\Delta_B = \sum_{k=1}^{n-1} \frac{\partial^2}{\partial x_k^2}
+ \left( \frac{\partial^2}{\partial x_n^2} + \frac{2\nu}{x_n} \frac{\partial}{\partial x_n} \right)
\quad (\nu > 0)
$$
were obtained in \cite{Aliev}.

There are some properties of $(cI-\Delta_\gamma)^{-\frac{\alpha}{2}} \varphi$  are known (see \cite{Dzhabrailov} for the case $c{\,=\,}1$). 
Namely,
\begin{enumerate}
	\item[(1)] For every $\alpha{\,>\,}0$, the operator $(cI-\Delta_\gamma)^{-\frac{\alpha}{2}}$ maps $L^p_\gamma({\mathbb{R}}^{n}_+)$ into itself and is bounded with respect to the norm $\|\cdot\|_{1,\gamma}$.
	
	\item[(2)] For every function $\varphi{\,\in\,} L^p_\gamma(\mathbb{R}^n_+)$, where $1{\,<\,}p{\,<\,}q{\,<\,}\infty$ and $\frac{1}{q}{\,=\,}\frac{1}{p}{\,-\,}\frac{\alpha}{n+|\gamma|}$ with $0{\,<\,}\alpha{\,<\,}n{\,+\,}|\gamma|$, there exists a constant $C{\,=\,}C(n,\gamma,\alpha,p){\,<\,}\infty$ such that
	$$
	\|(cI-\Delta_\gamma)^{-\frac{\alpha}{2}} \varphi\|_{q,\gamma} \leq C \|\varphi\|_{p,\gamma}.
	$$
	In other words, $(cI-\Delta_\gamma)^{-\frac{\alpha}{2}} \varphi \in L^\gamma_q(\mathbb{R}^n_+)$.
\end{enumerate}

Reversing the sign of $\alpha$ in formula \eqref{FrL01} is not possible, since the resulting integral
has a strong singularity (of power order greater than $n+|\gamma|$) and diverges for bounded $\varphi$. 

Let us consider
$$
\omega_{\alpha,\gamma}(\sqrt{c}|x|)=\frac{2^{\frac{n-|\gamma|-\alpha}{2}+1} }
{\Gamma\left(\frac{\alpha}{2} \right)\prod\limits^n_{i=1}\Gamma\left(\frac{\gamma_i+1}{2}\right)}\,(\sqrt{c}|x|)^{\frac{n+|\gamma|-\alpha}{2}}\,K_{\frac{n+|\gamma|-\alpha}{2}}(\sqrt{c}|x|).
$$

Then the generalized Bessel potential \eqref{FrL01} can be represented as a generalized convolution operator in the form
$$
	(cI-\Delta_\gamma)^{-\frac{\alpha}{2}}\varphi=\left(\frac{\omega_{\alpha,\gamma}(\sqrt{c}|x|)}{(\sqrt{c}|x|)^{n+|\gamma|-\alpha}}*\varphi\right)_\gamma,\qquad \alpha>0.
$$

Let us consider the convolution operator 
$$
(cI-\Delta_\gamma)^{\frac{\alpha}{2}}_\varepsilon\varphi=( g^\alpha_{\gamma,\varepsilon,c}*\varphi)_\gamma
$$
with the kernel
$$
	g^\alpha_{\gamma,\varepsilon,c}(x)= \left( \mathbf{F}_\gamma^{-1}(c+|\xi|^2)^{\frac{\alpha}{2}}\cdot e^{-\varepsilon|\xi|}\right) (x).
$$
  For  $\varphi\in S_{ev}(\mathbb{R}^n_+)$ operator 
 $(cI-\Delta_\gamma)^{\frac{\alpha}{2}}_\varepsilon\varphi$ is
 bounded in $L^p_\gamma(\mathbb{R}^n_+)$, $1{\,<\,}p{\,<\,}+\infty$ (see proof in \cite{DzhabrailovLuchkoS} for $c{\,=\,}1$).

We define the positive fractional power of $(I-\Delta_\gamma)$ by passing to the limit
\begin{equation}\label{PF1111}
(I-\Delta_\gamma)^{\frac{\alpha}{2}}\varphi(x)
	=
	\lim_{\varepsilon \to 0} \,
	\left(
	\mathbf{F}_\gamma^{-1}\big[(1+|\xi|^2)^{\frac{\alpha}{2}} e^{-\varepsilon|\xi|}\big](x) * \varphi(x)
	\right)_\gamma,
\end{equation}
where the limit is taken in $L_p^\gamma(\mathbb{R}^n_+)$.

In \cite{DzhabrailovLuchkoS}, an alternative representation for $(I-\Delta_\gamma)^{\frac{\alpha}{2}}$ is derived using the Taylor-Delsarte formula.

\section{Generalised Bessel potential spaces}

\subsection{Definition and norm}

The purpose of this paragraph is to  study the scale of generalized Bessel potential spaces, which generalize both the classical H\"{o}lder spaces and the Sobolev spaces.

Let $\alpha{\,\in\,}\mathbb{R}$, $1{\,\leq\,}p{\,\leq\,}\infty$.
 Weighted Sobolev space with non-integer order $\alpha$ is defined as
$$
H^{\alpha,p}_\gamma(\mathbb{R}^n_+) =\left\{ 
f \in \mathcal{S}'_{ev}(\mathbb{R}^n_+):(I-\Delta_\gamma)^{\frac{\alpha}{2}} f
  \in L^p_\gamma(\mathbb{R}^n_+)
\right\}.
$$
Depending on the value of $\alpha$, the operator $(I-\Delta_\gamma)^{\frac{\alpha}{2}}$ is defined as follows: for $\alpha{\,<\,} 0$, formula \eqref{FrL01} applies; for $\alpha{\,>\,}0$, formula \eqref{PF1111} applies; and for $\alpha{\,=\,}0$, it reduces to the identity operator $I$.
Also, when 	$\alpha{\,<\,} 0$ 
we can write
$(I-\Delta_\gamma)^{\frac{\alpha}{2}}\varphi{\,=\,}\mathbf{F}_\gamma^{-1}(1+|x|^2)^{\frac{\alpha}{2}} \mathbf{F}_\gamma\varphi$.

Space $H^{\alpha,p}_\gamma(\mathbb{R}^n_+)$
is called {\bf generalised Bessel potential space}. 

By definition $(I-\Delta_\gamma)^{\frac{\alpha}{2}}$ is an isomorphism from $H^{\alpha,p}_\gamma(\mathbb{R}^n_+)$ to $L^p_\gamma(\mathbb{R}^n_+)$.

For $1{\,\leq\,}p{\,<\,}\infty$, the norm in $H^{\alpha,p}_\gamma(\mathbb{R}^n_+)$ is defined by
$$
\|f\|_{H^{\alpha,p}_\gamma(\mathbb{R}^n_+)}= 
\left\| 
(I-\Delta_\gamma)^{\frac{\alpha}{2}}f
\right\|_{L^p_\gamma(\mathbb{R}^n_+)}.
$$

	Explicitly, this means
$$
	\|f\|_{H^{\alpha,p}_\gamma(\mathbb{R}^n_+)}
	=
	\left(\,\, 
	\int\limits_{\mathbb{R}^n_+} 
	\left| 
	(I-\Delta_\gamma)^{\frac{\alpha}{2}}f(x)
	\right|^p 
	\, x^\gamma dx
	\right)^{1/p}.
	$$
	
	For $p{\,=\,}\infty$, the norm is given by the essential supremum
	$$
	\|f\|_{H^{\alpha,\infty}_\gamma(\mathbb{R}^n_+)}
	=
\underset{x \in \mathbb{R}^n_+}{\operatorname{ess\,sup}}
	\left| 
	(I-\Delta_\gamma)^{\frac{\alpha}{2}}f(x)
	\right|.
	$$
	
$H^{\alpha,p}_\gamma(\mathbb{R}^n_+)$ are Banach spaces in general.
 In the special case $p{\,=\,}2$ 
\begin{equation}\label{Skpr}
 \langle u,v \rangle_{H^{\alpha,2}_\gamma(\mathbb{R}^n_+)}=\left\langle (I-\Delta_\gamma)^{\frac{\alpha}{2}} u, 	(I-\Delta_\gamma)^{\frac{\alpha}{2}} v
 \right\rangle_{L_\gamma^2(\mathbb{R}_+^n)}
\end{equation}
is an inner product on $H^{\alpha,2}_\gamma(\mathbb{R}^n_+)$ and $\|u\|_{H^{\alpha,2}_\gamma(\mathbb{R}^n_+)}^2=\langle u,u \rangle_{H^{\alpha,2}_\gamma(\mathbb{R}^n_+)}$.
 Thus $H^{\alpha,2}_\gamma(\mathbb{R}^n_+)$ is a Hilbert space.

	When $p{\,=\,}2$, the space $H^{\alpha,2}_\gamma(\mathbb{R}^n_+)$ is  denoted simply as $H^\alpha_\gamma(\mathbb{R}^n_+)$. In this special case, by Plancherel's identity for Hankel transform \eqref{Plancherel}, the norm simplifies for $\alpha{\,<\,}0$ to
	$$
	\|f\|_{H^{\alpha,2}_\gamma(\mathbb{R}^n_+)}=\|f\|_{H^{\alpha}_\gamma(\mathbb{R}^n_+)}
	=
	\left( \,\,
	\int\limits_{\mathbb{R}^n_+} 
	(1 + |\xi|^2)^\frac{\alpha}{2} \, |\mathbf{F}_\gamma f(\xi)|^2 \, \xi^\gamma d\xi
	\right)^{1/2}.
	$$

\subsection{Reproducing Kernel for the Hilbert Space of Generalised Bessel Potentials}

The generalised Bessel potential space   $H^{\alpha}_\gamma(\mathbb{R}^n_+) $ is a reproducing kernel Hilbert space (RKHS) if and only if the fractional order of differentiability satisfies $\alpha{\,>\,}\frac{n+|\gamma|}{2}$. 
In other words, the reproducing kernel property holds exactly when the smoothness parameter exceeds half the $n{\,+\,}|\gamma|$.	

Let $\alpha{\,>\,}0$ and
\begin{equation}\label{kernel}
	G_{\alpha}^\gamma(x)=	\mathbf{F}_\gamma^{-1}[(1+|\xi|^2)^{-\frac{\alpha}{2}}](x)=
	\frac{2^{\frac{n-|\gamma|-\alpha}{2}+1}}{\Gamma\left(\frac{\alpha}{2}\right)\prod\limits_{i=1}^n{\Gamma\left(\frac{\gamma_i{+}1}{2}\right)}} |x|^{\frac{\alpha-n-|\gamma|}{2}}K_{\frac{n+|\gamma|-\alpha}{2}}(|x|),
\end{equation}	
then
$$
(I-\Delta_\gamma)^{-\frac{\alpha}{2}} \varphi(x)=\int\limits_{\mathbb{R}^n_+}G_{\alpha}^\gamma(y)(^\gamma\mathbf{T}^y_x\varphi)(x)y^\gamma dy=
\int\limits_{\mathbb{R}^n_+}(^\gamma\mathbf{T}^y_xG_{\alpha}^\gamma(x))\varphi(y)y^\gamma dy=\left\langle \,^\gamma\mathbf{T}^yG_{\alpha}^\gamma,\varphi\right\rangle_\gamma
$$
or
$$
(I-\Delta_\gamma)^{-\frac{\alpha}{2}} \varphi=(G_{\alpha}^\gamma*\varphi)_\gamma.
$$

\begin{theorem}\label{T2}
For $\alpha{\,>\,}0$ the space $H^{\alpha}_\gamma(\mathbb{R}^n_+)$ is a reproducing kernel Hilbert space  equipped 
with the  norm 
$$
\|f\|_{H^{\alpha}_\gamma(\mathbb{R}^n_+)}
=
\left(\,\, 
\int\limits_{\mathbb{R}^n_+} 
\left| 
(I-\Delta_\gamma)^{\frac{\alpha}{2}}f(x)
\right|^2 
\, x^\gamma dx
\right)^{1/2}
$$
and function $\,^\gamma\mathbf{T}^yG_{2\alpha}^\gamma$ is the reproducing kernel for this Hilbert space.
\end{theorem}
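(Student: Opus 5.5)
The plan is to work entirely on the Hankel transform side, using Plancherel's identity \eqref{Plancherel}, Parseval's identity \eqref{Parseval} and the shift rule \eqref{HanSdv}. The inner product \eqref{Skpr} on $H^{\alpha}_\gamma(\mathbb{R}^n_+)$ becomes
$$
\langle u,v\rangle_{H^{\alpha}_\gamma(\mathbb{R}^n_+)}=\int\limits_{\mathbb{R}^n_+}(1+|\xi|^2)^{\alpha}\,\mathbf{F}_\gamma u(\xi)\,\overline{\mathbf{F}_\gamma v(\xi)}\,\xi^\gamma\,d\xi .
$$
To justify this, I first check it on $S_{ev}$: by the definition \eqref{PF1111}, the operator $(I-\Delta_\gamma)^{\frac{\alpha}{2}}$ multiplies $\mathbf{F}_\gamma\varphi$ by $(1+|\xi|^2)^{\frac{\alpha}{2}}$ (apply \eqref{fursver1} and let $\varepsilon\to0$). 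I then extend it to all of $H^\alpha_\gamma$ by density and the $S_{ev}'$ definition of $\mathbf{F}_\gamma$. With this description, $H^\alpha_\gamma$ is unitarily equivalent to $L^2\big(\mathbb{R}^n_+,(1+|\xi|^2)^\alpha\xi^\gamma d\xi\big)$, which gives completeness.

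Fix $y\in\mathbb{R}^n_+$ and put $K(x,y)=\,^\gamma\mathbf{T}^y_xG^\gamma_{2\alpha}(x)$. By \eqref{kernel} and \eqref{HanSdv},
$$
\mathbf{F}_\gamma[K(\cdot,y)](\xi)=\mathbf{j}_\gamma(y,\xi)(1+|\xi|^2)^{-\alpha}.
$$
\textbf{Step 1 (membership).} Compute
$$
\|K(\cdot,y)\|^2_{H^\alpha_\gamma}=\int\limits_{\mathbb{R}^n_+}(1+|\xi|^2)^{-\alpha}\,\mathbf{j}_\gamma^2(y,\xi)\,\xi^\gamma d\xi .
$$
Since $|j_\nu|\le 1$ for $\nu\ge-\frac12$, I bound this by $\int(1+|\xi|^2)^{-\alpha}\xi^\gamma d\xi$. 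Passing to polar coordinates and using \eqref{JBes} with $\xi=0$ for the angular factor, this is a constant times $\int_0^\infty(1+r^2)^{-\alpha}r^{n+|\gamma|-1}dr$. That integral is finite exactly when $2\alpha>n+|\gamma|$.

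This is the main obstacle. At $y=0$ one has $\mathbf{j}_\gamma\equiv1$, so the bound is sharp, and membership (and indeed continuity of point evaluation) fails for $\alpha\le\frac{n+|\gamma|}{2}$. I would therefore prove the theorem under the hypothesis $\alpha>\frac{n+|\gamma|}{2}$, as announced at the beginning of this subsection, and read ``$\alpha>0$'' in the statement in that sense. For smaller $\alpha$ the argument only shows that $K$ represents evaluation on the dense subspace $S_{ev}$.

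\textbf{Step 2 (pointwise meaning).} For $f\in H^\alpha_\gamma$ I show by Cauchy--Schwarz that $\mathbf{F}_\gamma f\in L^1_\gamma$:
$$
\int|\mathbf{F}_\gamma f|\xi^\gamma d\xi\le\Big(\int(1+|\xi|^2)^{-\alpha}\xi^\gamma d\xi\Big)^{1/2}\|f\|_{H^\alpha_\gamma}.
$$
The same integral as in Step 1 appears, so $f$ coincides with the continuous bounded function $\mathbf{F}^{-1}_\gamma\mathbf{F}_\gamma f$. This makes $H^\alpha_\gamma$ a genuine space of functions on $\mathbb{R}^n_+$, and it yields $|f(y)|\le C\|f\|_{H^\alpha_\gamma}$.

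\textbf{Step 3 (reproducing property).} Using the inner product formula above,
$$
\langle f,K(\cdot,y)\rangle_{H^\alpha_\gamma}=\int(1+|\xi|^2)^{\alpha}\mathbf{F}_\gamma f(\xi)\,\mathbf{j}_\gamma(y,\xi)(1+|\xi|^2)^{-\alpha}\xi^\gamma d\xi=\int\mathbf{F}_\gamma f(\xi)\,\mathbf{j}_\gamma(y,\xi)\,\xi^\gamma d\xi .
$$
With the normalising constant in the inversion formula (absorbed consistently on both sides of \eqref{Parseval}), this equals $\mathbf{F}_\gamma^{-1}[\mathbf{F}_\gamma f](y)=f(y)$; the constant must be tracked carefully. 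Real-valuedness and symmetry of $K$ follow from the symmetry of the generalized translation, and uniqueness of the kernel follows from the one-to-one correspondence between RKHS and reproducing kernels.
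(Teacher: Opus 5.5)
Your proof follows the same route as the paper's. Membership of $K(\cdot,y)={}^\gamma\mathbf{T}^yG^\gamma_{2\alpha}$ comes from Plancherel and \eqref{HanSdv}, and the reproducing identity from Parseval. You differ from the paper at the decisive step, and there you are right and the paper is wrong.

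The paper replaces the angular integral of $\mathbf{j}_\gamma^2$ by a constant; it is only bounded, via $|j_\nu|\le 1$, which is exactly what you use. It then asserts $\lim_{\varepsilon\to0}\int_0^\infty(1+r^2)^{-\alpha}e^{-2\varepsilon r}r^{n+|\gamma|-1}\,dr<\infty$ for every $\alpha>0$. By monotone convergence this limit is $+\infty$ whenever $2\alpha\le n+|\gamma|$, and at $y=0$, where $\mathbf{j}_\gamma\equiv 1$, your bound is an equality.

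So the statement with ``$\alpha>0$'' is false, and the paper's membership argument breaks down. Your hypothesis $\alpha>\frac{n+|\gamma|}{2}$ is the one announced at the start of that subsection. Your Step 2 ($\mathbf{F}_\gamma f\in L^1_\gamma$, so $f$ is continuous and $|f(y)|\le C\|f\|_{H^\alpha_\gamma}$) and your completeness remark are genuine additions. The paper never checks that elements of $H^\alpha_\gamma$ have well-defined point values. Writing the norm directly as a Hankel multiplier also makes the paper's $\varepsilon$-regularization unnecessary.

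Two things need tightening.

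\textbf{The constant.} The normalization does not take care of itself. With the paper's definitions, $f(y)=c_\gamma\int\mathbf{F}_\gamma f(\xi)\,\mathbf{j}_\gamma(y,\xi)\,\xi^\gamma d\xi$, where $c_\gamma=2^{\frac{n-|\gamma|}{2}}/\prod_{j}\Gamma(\frac{\gamma_j+1}{2})$. So if $\mathbf{F}_\gamma G^\gamma_{2\alpha}=(1+|\xi|^2)^{-\alpha}$, as you assume, your Step 3 yields $f(y)/c_\gamma$, not $f(y)$.

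The resolution is that the two expressions in \eqref{kernel} differ by exactly this factor. The Macdonald-function expression is the generalized-convolution kernel produced in Theorem 1, and its Hankel transform is $c_\gamma(1+|\xi|^2)^{-\alpha/2}$. Correspondingly, \eqref{fursver1} and \eqref{HT} are missing the factors $c_\gamma^{-1}$ and $c_\gamma$ respectively. If you take $G^\gamma_{2\alpha}$ to be the explicit Macdonald-function kernel, then $\langle f,{}^\gamma\mathbf{T}^yG^\gamma_{2\alpha}\rangle_{H^\alpha_\gamma}=f(y)$ holds exactly. The paper's proof makes the same slip without comment.

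\textbf{The threshold.} Your condition is the necessary one only if the origin belongs to the set on which point evaluation is required, that is, the closed orthant or the even extension to $\mathbb{R}^n$. For $y$ in the open orthant, the estimate $|j_\nu(t)|\le C\min(1,t^{-\nu-\frac12})$ gives $\mathbf{j}_\gamma^2(y,\xi)\,\xi^\gamma\le C\,y^{-\gamma}$. Hence $K(y,y)<\infty$ already for $\alpha>\frac n2$. The crude bound $|j_\nu|\le 1$ is what costs you the extra $|\gamma|$.
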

\begin{proof}
First, we need to show that	for every $y \in \mathbb{R}^n_+$, the function ${}^\gamma\mathbf{T}^y G_{2\alpha}^\gamma$ belongs to $H^{\alpha}_\gamma(\mathbb{R}^n_+)$. Consequently, it suffices to prove that $(I-\Delta_\gamma)^{\frac{\alpha}{2}} \, {}^\gamma\mathbf{T}^y G_{2\alpha}^\gamma \in L^2_\gamma(\mathbb{R}^n_+)$.
By \eqref{Plancherel} we can write
$$
\|(I-\Delta_\gamma)^{\frac{\alpha}{2}} \, {}^\gamma\mathbf{T}^y G_{2\alpha}^\gamma\|_{L^2_\gamma(\mathbb{R}^n_+)}^2{=}	\int\limits_{\mathbb{R}_+^n} |(I-\Delta_\gamma)^{\frac{\alpha}{2}} \, {}^\gamma\mathbf{T}^y G_{2\alpha}^\gamma|^2 \, x^\gamma \, dx
	{=}	\int\limits_{\mathbb{R}_+^n} |\mathbf{F}_\gamma[(I-\Delta_\gamma)^{\frac{\alpha}{2}} \, {}^\gamma\mathbf{T}^y G_{2\alpha}^\gamma](\eta)|^2 \, \eta^\gamma \, d\eta.
$$
Writing the formula \eqref{PF1111} explicitly before passing to the limit yields
$$
\mathbf{F}_\gamma\left[	\left(
\mathbf{F}_\gamma^{-1}\big[(1+|\xi|^2)^{\frac{\alpha}{2}} e^{-\varepsilon|\xi|}\big](y) * {}^\gamma\mathbf{T}^y_x G_{2\alpha}^\gamma(x)
\right)_\gamma\right](\eta)=(1+|\eta|^2)^{-\frac{\alpha}{2}} e^{-\varepsilon|\eta|}\mathbf{j}_{\gamma}(x,\eta).
$$	
Then,
\begin{multline*}
\|(I-\Delta_\gamma)^{\frac{\alpha}{2}} \, {}^\gamma\mathbf{T}^y G_{2\alpha}^\gamma\|_{L^2_\gamma(\mathbb{R}^n_+)}^2=\lim\limits_{\varepsilon\to 0}\int\limits_{\mathbb{R}_+^n} \left|(1+|\eta|^2)^{-\frac{\alpha}{2}} e^{-\varepsilon|\eta|}\mathbf{j}_{\gamma}(x,\eta)\right|^2 \eta^\gamma \, d\eta=\\
=\lim\limits_{\varepsilon\to 0}\int\limits_{\mathbb{R}_+^n} (1+|\eta|^2)^{- \alpha} e^{-2\varepsilon|\eta|}\mathbf{j}_{\gamma}^2(x,\eta)  \eta^\gamma \, d\eta=\{\eta=r\theta\}=\\
=\lim\limits_{\varepsilon\to 0}\int\limits_{0}^\infty (1+r^2)^{- \alpha} e^{-2\varepsilon r}r^{n+|\gamma|-1}dr\int\limits_{S^+_1(n)}\mathbf{j}_{\gamma}^2(x,r\theta)  \theta^\gamma \, dS=\\ 
=C\lim\limits_{\varepsilon\to 0}\int\limits_{0}^\infty (1+r^2)^{- \alpha} e^{-2\varepsilon r}r^{n+|\gamma|-1}dr<\infty.
\end{multline*}
 Here $\theta^\gamma{=}\prod\limits_{i=1}^{n}\theta_i^{\gamma_i},$ $S^+_1(n){=}\{\theta{:}|\theta|{=}1,\theta{\in}\mathbb{R}^n_+\}$ is a part of a sphere in
 $\mathbb{R}^n_+$.
	
It remains to show that, for every $x{\,\in\,}\mathbb{R}^n_+$ and every $\varphi{\,\in\,}H^{\alpha}_\gamma(\mathbb{R}^n_+)$, the evaluation functional at $x$ is represented by the inner product
	$$
	\varphi(x) = \langle \varphi,\,^\gamma\mathbf{T}^yG_{\alpha}^\gamma \rangle_{H^{\alpha}_\gamma(\mathbb{R}^n_+)}.
	$$
Indeed, by \eqref{Parseval} and  \eqref{Skpr} we get
\begin{multline*}
 \langle \varphi,\,^\gamma\mathbf{T}^yG_{2\alpha}^\gamma \rangle_{H^{\alpha}_\gamma(\mathbb{R}^n_+)}=\left\langle (I-\Delta_\gamma)^{\frac{\alpha}{2}} \varphi, 	(I-\Delta_\gamma)^{\frac{\alpha}{2}} \,^\gamma\mathbf{T}^yG_{2\alpha}^\gamma \right\rangle_{L_\gamma^2(\mathbb{R}_+^n)}=\\
=\int\limits_{\mathbb{R}^n_+} (I-\Delta_\gamma)^{\frac{\alpha}{2}} \varphi(y)\cdot (I-\Delta_\gamma)^{\frac{\alpha}{2}} \,^\gamma\mathbf{T}^y_xG_{2\alpha}^\gamma(x)\cdot y^\gamma dy=\\
=\lim_{\varepsilon \to 0} \int\limits_{\mathbb{R}^n_+} \mathbf{F}_\gamma \left[\left(
\mathbf{F}_\gamma^{-1}\big[(1+|\xi|^2)^{\frac{\alpha}{2}} e^{-\varepsilon|\xi|}\big](y) * \varphi(y)
\right)_\gamma\right](\eta)\cdot  \mathbf{F}_\gamma \left[(I-\Delta_\gamma)^{\frac{\alpha}{2}}\,^\gamma\mathbf{T}^y_xG_{2\alpha}^\gamma(x)\right](\eta)\cdot \eta^\gamma d\eta.
\end{multline*}

Applying \eqref{HanSdv} and \eqref{fursver1}, we obtain
\begin{equation*}
 \mathbf{F}_\gamma \left[\left(
\mathbf{F}_\gamma^{-1}\big[(1+|\xi|^2)^{\frac{\alpha}{2}} e^{-\varepsilon|\xi|}\big](y) * \varphi(y)
\right)_\gamma\right](\eta)
=(1+|\eta|^2)^{\frac{\alpha}{2}} e^{-\varepsilon|\eta|}\cdot \mathbf{F}_\gamma[\varphi](\eta)
\end{equation*}
and
\begin{multline*}
\mathbf{F}_\gamma[(I-\Delta_\gamma)^{\frac{\alpha}{2}} \,^\gamma\mathbf{T}^yG_{2\alpha}^\gamma](\eta)=\lim_{\varepsilon_0 \to 0} \,
\mathbf{F}_\gamma\left[\left(
\mathbf{F}_\gamma^{-1}\big[(1+|\xi|^2)^{\frac{\alpha}{2}} e^{-\varepsilon_0|\xi|}\big](y) * \,^\gamma\mathbf{T}^y_xG_{2\alpha}^\gamma(x)
\right)_\gamma\right](\eta)=\\
=\lim_{\varepsilon_0 \to 0} \,(1+|\eta|^2)^{\frac{\alpha}{2}} e^{-\varepsilon_0|\eta|}\mathbf{F}_\gamma\left[\,^\gamma\mathbf{T}^\eta_xG_{2\alpha}^\gamma(x)\right](\eta)=\lim_{\varepsilon_0 \to 0} \,(1+|\eta|^2)^{\frac{\alpha}{2}} e^{-\varepsilon_0|\eta|}\mathbf{j}_{\gamma}(x,\eta)\,\mathbf{F}_\gamma\left[G_{2\alpha}^\gamma\right](\eta)=\\
=\lim_{\varepsilon_0 \to 0} \,(1+|\eta|^2)^{\frac{\alpha}{2}} e^{-\varepsilon_0|\eta|}\mathbf{j}_{\gamma}(x,\eta)\,(1+|\eta|^2)^{-\alpha}=(1+|\eta|^2)^{-\frac{\alpha}{2}} \mathbf{j}_{\gamma}(x,\eta).
\end{multline*}
Therefore,
\begin{multline*}
  \langle \varphi,\,^\gamma\mathbf{T}^yG_{\alpha}^\gamma \rangle_{H^{\alpha}_\gamma(\mathbb{R}^n_+)}=\lim_{\varepsilon \to 0}\int\limits_{\mathbb{R}^n_+} (1+|\eta|^2)^{\frac{\alpha}{2}} e^{-\varepsilon|\eta|}\cdot \mathbf{F}_\gamma[\varphi](\eta)\cdot (1+|\eta|^2)^{-\frac{\alpha}{2}}
  \mathbf{j}_{\gamma}(x,\eta) \eta^\gamma d\eta=\\
  = \mathbf{F}_\gamma^{-1}[ \mathbf{F}_\gamma[\varphi](\eta)](x)=\varphi(x).
\end{multline*}
 \end{proof}

	\begin{corollary}
	Function $\,^\gamma\mathbf{T}^yG_{2\alpha}^\gamma$  for $\alpha{\,>\,}0$ is a Green function	 for the operator  
	$(I-\Delta_\gamma)^{\alpha}$ in the sense  
	$$
	\int\limits_{\mathbb{R}^n_+} (I-\Delta_\gamma)^{\alpha}\,^\gamma\mathbf{T}^y_xG_{2\alpha}^\gamma(x)\, \varphi(y)y^\gamma dy=\varphi(x).
	$$
\end{corollary}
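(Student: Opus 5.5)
The plan is to work entirely on the Hankel transform side, exactly as in the proof of Theorem~\ref{T2}. The integral on the left is a weighted $L^2_\gamma$-pairing in the variable $y$ of the function $y\mapsto (I-\Delta_\gamma)^{\alpha}\,{}^\gamma\mathbf{T}^y_xG_{2\alpha}^\gamma(x)$ with $\varphi$. So the first step is to identify the Hankel transform of that function.

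By \eqref{HanSdv}, the generalized translation multiplies the transform by $\mathbf{j}_\gamma(x,\eta)$. By \eqref{kernel}, the transform of $G_{2\alpha}^\gamma$ is $(1+|\eta|^2)^{-\alpha}$. Using the definition \eqref{PF1111} of the positive power (with exponent $2\alpha$ in place of $\alpha$) together with \eqref{fursver1}, the regularized operator multiplies by $(1+|\eta|^2)^{\alpha}e^{-\varepsilon|\eta|}$. Hence, before passing to the limit,
$$
\mathbf{F}_\gamma\Bigl[(I-\Delta_\gamma)^{\alpha}_\varepsilon\,{}^\gamma\mathbf{T}^y G_{2\alpha}^\gamma\Bigr](\eta)=e^{-\varepsilon|\eta|}\,\mathbf{j}_\gamma(x,\eta).
$$
Since $\mathbf{F}_\gamma=\mathbf{F}_\gamma^{-1}$, this says that the regularized object equals $\mathbf{F}_\gamma^{-1}[e^{-\varepsilon|\eta|}\mathbf{j}_\gamma(x,\cdot)]$. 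This is a Poisson-type approximation of ${}^\gamma\mathbf{T}^x\delta_\gamma$, i.e.\ of a delta-shaped sequence centred at $x$ in the sense of Section~2.

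The second step is to pair with $\varphi\in S_{ev}$ and apply Parseval \eqref{Parseval}:
$$
\int\limits_{\mathbb{R}^n_+}(I-\Delta_\gamma)^{\alpha}_\varepsilon\,{}^\gamma\mathbf{T}^y_xG_{2\alpha}^\gamma(x)\,\varphi(y)\,y^\gamma dy=\int\limits_{\mathbb{R}^n_+}e^{-\varepsilon|\eta|}\,\mathbf{j}_\gamma(x,\eta)\,\mathbf{F}_\gamma[\varphi](\eta)\,\eta^\gamma d\eta,
$$
up to the normalizing constant of the inverse transform. Since $|\mathbf{j}_\gamma|\le 1$ and $\mathbf{F}_\gamma\varphi\in S_{ev}$, dominated convergence lets $\varepsilon\to0$, and the right-hand side tends to $\mathbf{F}_\gamma^{-1}[\mathbf{F}_\gamma\varphi](x)=\varphi(x)$.

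An alternative route that avoids recomputing is to write $(I-\Delta_\gamma)^{\alpha}=(I-\Delta_\gamma)^{\alpha/2}(I-\Delta_\gamma)^{\alpha/2}$ and use self-adjointness in $L^2_\gamma$ (both are real Hankel multipliers). The left side then becomes $\langle \varphi,{}^\gamma\mathbf{T}^xG_{2\alpha}^\gamma\rangle_{H^\alpha_\gamma}$, and Theorem~\ref{T2} gives $\varphi(x)$ directly.

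The main obstacle is interpretive. For fixed $x$, the function $(I-\Delta_\gamma)^\alpha\,{}^\gamma\mathbf{T}^yG_{2\alpha}^\gamma$ is not in $L^2_\gamma$; its transform $\mathbf{j}_\gamma(x,\eta)$ is not square integrable. So the identity must be understood in $S'_{ev}$, as $(I-\Delta_\gamma)^\alpha\,{}^\gamma\mathbf{T}^xG_{2\alpha}^\gamma={}^\gamma\mathbf{T}^x\delta_\gamma$, with the integral read as the $\varepsilon\to0$ limit, in the same convention the paper adopts for $\delta_\gamma$. The symmetry ${}^\gamma\mathbf{T}^y_xG(x)={}^\gamma\mathbf{T}^x_yG(y)$ of the generalized translation is what justifies swapping the roles of $x$ and $y$ in the pairing.
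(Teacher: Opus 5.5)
Your argument is correct. Your ``alternative route'' is how the paper obtains the corollary: it is stated without proof right after Theorem~\ref{T2}, i.e.\ it is read off from the reproducing identity by moving one factor $(I-\Delta_\gamma)^{\alpha/2}$ across the $L^2_\gamma$-pairing. Your main route is different. It puts the whole multiplier on the kernel and redoes the Hankel-side computation of Theorem~\ref{T2} directly. First you apply Parseval at fixed $\varepsilon>0$, which is legitimate since $e^{-\varepsilon|\eta|}\mathbf{j}_\gamma(x,\eta)\in L^2_\gamma$. Then you use dominated convergence via $|\mathbf{j}_\gamma|\le 1$, which is valid because the orders $\frac{\gamma_k-1}{2}$ exceed $-\frac12$.

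The paper's route is a one-liner once Theorem~\ref{T2} is available. However, it needs ${}^\gamma\mathbf{T}^xG_{2\alpha}^\gamma\in H^\alpha_\gamma(\mathbb{R}^n_+)$ for the inner product to exist, and that fails for small $\alpha$. The squared norm is $\int_{\mathbb{R}^n_+}(1+|\eta|^2)^{-\alpha}\mathbf{j}_\gamma^2(x,\eta)\eta^\gamma d\eta$, which at $x=0$ is finite only for $2\alpha>n+|\gamma|$. This is also the only range in which the radial integral in the proof of Theorem~\ref{T2} converges, matching the paper's own remark that the RKHS property holds iff $\alpha>\frac{n+|\gamma|}{2}$.

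Your direct computation uses only $\varphi\in S_{ev}$, so it covers every $\alpha>0$, as the corollary claims. It also makes explicit what the paper leaves implicit: the left-hand side is $\langle{}^\gamma\mathbf{T}^x\delta_\gamma,\varphi\rangle_\gamma$, with the $\varepsilon\to0$ limit taken after pairing with $\varphi$. This is necessary because the $L^p_\gamma$-limit in \eqref{PF1111} does not exist for this function.

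The normalizing constant you set aside is the same one the paper silently drops. With this symmetric normalization, \eqref{fursver1} really carries a factor $2^{\frac{|\gamma|-n}{2}}\prod_j\Gamma(\frac{\gamma_j+1}{2})$. The constant cancels exactly when $(I-\Delta_\gamma)^{\alpha}$ and $(I-\Delta_\gamma)^{-\alpha}=(G_{2\alpha}^\gamma*\,\cdot\,)_\gamma$ are both normalized to act as the Hankel multipliers $(1+|\eta|^2)^{\pm\alpha}$, which is what the statement intends.
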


\section{Singular Poisson's Equation}

In \cite{ByunSaitoh,Saitoh1997,SaitohSawano}, it was demonstrated that the theory of reproducing kernels, together with Tikhonov regularization, provides a simple and practical framework for obtaining approximate solutions to diverse inverse problems. Here we illustrate the RKHS method by applying it to approximate the solution of the inverse problem for the singular Poisson equation.

The next theorem from \cite{Matsuura} gives instructions on how to construct an RKHS for a given operator.

\begin{theorem}\label{T3}
	Let $\mathcal{H}_K$ be a Hilbert space admitting the reproducing kernel $K(x,y)$ on a set $E$. Let $L{\,:\,}\mathcal{H}_K \to \mathcal{H}$ be a bounded linear operator from $\mathcal{H}_K$ into $\mathcal{H}$. For $\lambda{\,>\,}0$, introduce the inner product in $\mathcal{H}_K$ and call it $\mathcal{H}_{K_\lambda}$ as
	$$
	\langle u, v \rangle_{\mathcal{H}_{K_\lambda}}
	= \lambda \langle u, v \rangle_{\mathcal{H}_K}
	+ \langle L u, L v \rangle_{\mathcal{H}}, 
	$$
	then $\mathcal{H}_{K_\lambda}$ is the Hilbert space with the reproducing kernel $K_\lambda(p, q)$ on $E$ and satisfying the equation
$$
	K(\cdot, y) = (\lambda I + L^* L) K_\lambda(\cdot, y), 
$$
	where $L^*$ is the adjoint of $L{\,:\,}\mathcal{H}_K \to \mathcal{H}$.
\end{theorem}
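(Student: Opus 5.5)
We need to prove Theorem \ref{T3}, the Saitoh--Matsuura result: for a bounded operator $L$, the space $\mathcal{H}_K$ with the modified inner product is an RKHS whose kernel $K_\lambda$ satisfies $K(\cdot,y)=(\lambda I+L^*L)K_\lambda(\cdot,y)$.

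\textbf{Step 1: the new inner product gives a Hilbert space.} The form $\langle u,v\rangle_{\mathcal{H}_{K_\lambda}}$ is bilinear (sesquilinear in the complex case), symmetric and positive. Since $L$ is bounded, the new norm is equivalent to the old one:
$$
\lambda\|u\|^2_{\mathcal{H}_K}\le\|u\|^2_{\mathcal{H}_{K_\lambda}}\le(\lambda+\|L\|^2)\|u\|^2_{\mathcal{H}_K}.
$$
Hence $\mathcal{H}_{K_\lambda}$ is complete: a Cauchy sequence in one norm is Cauchy in the other, and the limit is the same. It is also the same set of functions on $E$.

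\textbf{Step 2: point evaluation is bounded.} Using the reproducing property of $K$ and Cauchy--Schwarz,
$$
|u(y)|=|\langle u,K(\cdot,y)\rangle_{\mathcal{H}_K}|\le\sqrt{K(y,y)}\,\|u\|_{\mathcal{H}_K}\le\lambda^{-1/2}\sqrt{K(y,y)}\,\|u\|_{\mathcal{H}_{K_\lambda}}.
$$
By the Riesz representation theorem in $\mathcal{H}_{K_\lambda}$, there is a unique $K_\lambda(\cdot,y)\in\mathcal{H}_{K_\lambda}$ such that $u(y)=\langle u,K_\lambda(\cdot,y)\rangle_{\mathcal{H}_{K_\lambda}}$ for all $u$. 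So $\mathcal{H}_{K_\lambda}$ is an RKHS on $E$ with kernel $K_\lambda$.

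\textbf{Step 3: the identity linking the two kernels.} Let $L^*:\mathcal{H}\to\mathcal{H}_K$ be the adjoint of $L$ taken with respect to the original inner product. Then $\langle Lu,Lv\rangle_{\mathcal{H}}=\langle u,L^*Lv\rangle_{\mathcal{H}_K}$, and for all $u$,
$$
\langle u,K_\lambda(\cdot,y)\rangle_{\mathcal{H}_{K_\lambda}}=\langle u,(\lambda I+L^*L)K_\lambda(\cdot,y)\rangle_{\mathcal{H}_K}.
$$
The left side equals $u(y)=\langle u,K(\cdot,y)\rangle_{\mathcal{H}_K}$. Since this holds for every $u\in\mathcal{H}_K$, the element $(\lambda I+L^*L)K_\lambda(\cdot,y)-K(\cdot,y)$ is orthogonal to the whole space and is therefore zero. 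This gives the claimed equation.

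For completeness, $\lambda I+L^*L$ is a bounded, self-adjoint operator on $\mathcal{H}_K$ bounded below by $\lambda$, so it is invertible. This yields the explicit formula
$$
K_\lambda(\cdot,y)=(\lambda I+L^*L)^{-1}K(\cdot,y).
$$

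The only delicate point is keeping track of which inner product the adjoint refers to. It must be the adjoint for $\langle\cdot,\cdot\rangle_{\mathcal{H}_K}$, not for the new inner product, and the argument above uses it in that sense.
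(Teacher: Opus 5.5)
The paper gives no proof of this theorem. It quotes the result from \cite{Matsuura} and uses it as a black box, so there is no internal argument to compare yours with. Your proof is correct and complete, and it is the standard argument for this result:
\begin{itemize}
\item norm equivalence, $\lambda\|u\|^2_{\mathcal{H}_K}\le\|u\|^2_{\mathcal{H}_{K_\lambda}}\le(\lambda+\|L\|^2)\|u\|^2_{\mathcal{H}_K}$, gives completeness on the same function set;
\item the bound $|u(y)|\le\lambda^{-1/2}\sqrt{K(y,y)}\,\|u\|_{\mathcal{H}_{K_\lambda}}$ together with Riesz produces $K_\lambda$;
\item checking $\langle u,(\lambda I+L^*L)K_\lambda(\cdot,y)-K(\cdot,y)\rangle_{\mathcal{H}_K}=0$ for every $u$ gives the identity.
\end{itemize}
Your coercivity remark adds that the identity determines $K_\lambda(\cdot,y)=(\lambda I+L^*L)^{-1}K(\cdot,y)$ uniquely.

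Your closing remark about which inner product defines $L^*$ deserves emphasis. It is exactly the point the paper gets wrong when it applies the theorem. Take $\mathcal{H}_K=H^\alpha_\gamma(\mathbb{R}^n_+)$ and $L=\Delta_\gamma$. The relation $\langle \Delta_\gamma u,g\rangle_{L^2_\gamma(\mathbb{R}^n_+)}=\langle u,L^*g\rangle_{H^\alpha_\gamma(\mathbb{R}^n_+)}$ forces $\mathbf{F}_\gamma[L^*g](\xi)=-|\xi|^2(1+|\xi|^2)^{-\alpha}\mathbf{F}_\gamma[g](\xi)$. Hence $L^*L=(I-\Delta_\gamma)^{-\alpha}\Delta_\gamma^2$, not $\Delta_\gamma^2$ as the paper writes. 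Your identity then gives
$\mathbf{F}_\gamma[K^\gamma_\lambda(\cdot,y)](\xi)=\mathbf{j}_\gamma(y,\xi)/\bigl(\lambda(1+|\xi|^2)^{\alpha}+|\xi|^4\bigr)$.
This is precisely what makes $\langle u,K^\gamma_\lambda(\cdot,y)\rangle_{H^\alpha_{\gamma,\lambda}(\mathbb{R}^n_+)}=\langle u,K^\gamma(\cdot,y)\rangle_{H^\alpha_\gamma(\mathbb{R}^n_+)}$ hold for all $u$ under \eqref{InPr}. The paper's denominator $(1+|\xi|^2)^{\alpha}(\lambda+|\xi|^4)$ comes from using the $L^2_\gamma$-adjoint, and it does not have this property. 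So your proof of the abstract statement is fine, and your caveat is the right way to read it.
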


The singular Poisson equation is
\begin{equation}\label{Pois}
\Delta_\gamma u=g.
\end{equation}
Here we suppose $g{\,\in\,}L^2_\gamma(\mathbb{R}^n_+)$.

It is known (see \cite{LyakhovRiesz,LyakhovHyp}) that the Riesz $B$-potential  
$$u(x)=-\frac{\Gamma\left(\frac{n+|\gamma|}{2} - 1\right)}{2^{2-n} \prod\limits^n_{i=1}\Gamma\left(\frac{\gamma_i+1}{2}\right)}
\int\limits_{\mathbb{R}^n_+}|y|^{{2-n-|\gamma|}}\,^\gamma\mathbf{T}_x^y g(x)y^\gamma dy
$$
gives the solution to the singular Poisson equation \eqref{Pois}.

For example, from \eqref{DelBes} it is easy to see that 	
solution to the equation $\Delta_\gamma u{\,=\,}\mathbf{j}_\gamma(x,\xi)$ is $u(x){\,=\,}{-}\frac{\mathbf{j}_\gamma(x;\xi)}{|\xi|^2}$.
However, in general, calculations with the Riesz $B$-potential are not straightforward, so we provide a simple approximate solution to the singular Poisson equation in a specially constructed RKHS.

Using Theorems \ref{T2} and \ref{T3} let us construct RKHS for the Laplace-Bessel operator. 
First, we obtain the result about boundedness of the Laplace-Bessel operator in the space $H^{\alpha}_\gamma(\mathbb{R}^n_+)$.

\begin{lemma}\label{Lem1}
	The Laplace-Bessel operator $\Delta_\gamma$  is a bounded linear operator from the generalised Bessel potential space $H^{\alpha}_\gamma(\mathbb{R}^n_+)$ to the $L^2_\gamma(\mathbb{R}^n_+)$ if and only if $\alpha\geq 2$.
\end{lemma}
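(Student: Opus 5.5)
The plan is to move everything to the Hankel side, where both operators are Fourier multipliers with respect to the weight $\xi^\gamma$. By Plancherel \eqref{Plancherel} and the symbol identity \eqref{LBH}, for $f\in S_{ev}$ we have
$$
\|\Delta_\gamma f\|_{L^2_\gamma(\mathbb{R}^n_+)}^2=\int\limits_{\mathbb{R}^n_+}|\xi|^4|\mathbf{F}_\gamma f(\xi)|^2\xi^\gamma d\xi .
$$
In the same way, using the definition \eqref{PF1111} of $(I-\Delta_\gamma)^{\frac{\alpha}{2}}$, the convolution rule \eqref{fursver1}, and monotone convergence as $\varepsilon\to0$ (the factor $e^{-2\varepsilon|\xi|}$ increases to $1$), we get
$$
\|f\|_{H^{\alpha}_\gamma(\mathbb{R}^n_+)}^2=\int\limits_{\mathbb{R}^n_+}(1+|\xi|^2)^{\alpha}|\mathbf{F}_\gamma f(\xi)|^2\xi^\gamma d\xi .
$$
Linearity of $\Delta_\gamma$ is clear. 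So boundedness reduces to asking whether the multiplier $|\xi|^4$ is dominated by a constant times $(1+|\xi|^2)^\alpha$.

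\emph{Sufficiency ($\alpha\ge2$).} Use $|\xi|^4\le(1+|\xi|^2)^2\le(1+|\xi|^2)^\alpha$. This gives $\|\Delta_\gamma f\|_{2,\gamma}\le\|f\|_{H^\alpha_\gamma}$ on $S_{ev}$. Next, $S_{ev}$ is dense in $H^\alpha_\gamma$, because $(I-\Delta_\gamma)^{-\frac{\alpha}{2}}$ is an isometry of $L^2_\gamma$ onto $H^\alpha_\gamma$ and maps $S_{ev}$ into itself. Hence $\Delta_\gamma$, understood in $S'_{ev}$ via the multiplier $-|\xi|^2$, extends as a bounded operator of norm at most $1$.

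\emph{Necessity ($\alpha<2$).} Argue by contradiction. Suppose $\|\Delta_\gamma f\|_{2,\gamma}\le C\|f\|_{H^\alpha_\gamma}$ for all $f$. Choose test functions $f_k$ whose Hankel transforms are concentrated at high frequency. Concretely, take $\mathbf{F}_\gamma f_k=\chi_k$, where $\chi_k$ is a smooth function, even in each variable, with $0\le\chi_k\le1$, equal to $1$ on the spherical shell $k\le|\xi|\le k+1$ and supported in $k-1\le|\xi|\le k+2$. Then $f_k=\mathbf{F}_\gamma^{-1}\chi_k\in S_{ev}$, since $\mathbf{F}_\gamma$ is its own inverse on even Schwartz functions. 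We obtain
$$
\frac{\|\Delta_\gamma f_k\|_{2,\gamma}^2}{\|f_k\|_{H^\alpha_\gamma}^2}\ge\frac{(k-1)^4\int\chi_k^2\xi^\gamma d\xi}{(1+(k+2)^2)^\alpha\int\chi_k^2\xi^\gamma d\xi}\sim k^{4-2\alpha}\to\infty .
$$
This contradicts boundedness. The one point to verify is that $\int\chi_k^2\xi^\gamma d\xi>0$, which holds because the shell has positive $\xi^\gamma d\xi$-measure; polar coordinates as in the proof of Theorem~\ref{T2} make this explicit.

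\emph{Main obstacle.} The delicate step is justifying the spectral form of the $H^\alpha_\gamma$ norm for $\alpha>0$, because the paper defines $(I-\Delta_\gamma)^{\frac{\alpha}{2}}$ only through the limit \eqref{PF1111}. The plan is to compute the Hankel transform of the regularized convolution by \eqref{fursver1}, obtaining $(1+|\xi|^2)^{\frac{\alpha}{2}}e^{-\varepsilon|\xi|}\mathbf{F}_\gamma f$. Then apply Plancherel and pass to the limit $\varepsilon\to0$ by monotone convergence, exactly as in the proof of Theorem~\ref{T2}.
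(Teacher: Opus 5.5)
Your proof is correct. For sufficiency it is the paper's argument: Plancherel \eqref{Plancherel} and \eqref{LBH} turn $\|\Delta_\gamma u\|_{2,\gamma}^2$ into $\int|\xi|^4|\mathbf{F}_\gamma u|^2\xi^\gamma d\xi$, and $|\xi|^4\le(1+|\xi|^2)^\alpha$ for $\alpha\ge2$ finishes it. You also justify two steps the paper uses tacitly:
\begin{itemize}
\item the spectral form of the $H^\alpha_\gamma$ norm for $\alpha>0$ (the paper writes it only for $\alpha<0$), which you obtain via \eqref{fursver1} and monotone convergence in \eqref{PF1111};
\item the passage from $S_{ev}$ to all of $H^\alpha_\gamma$ by density.
\end{itemize}

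The genuine difference is in the ``only if'' direction. The paper only observes that the pointwise inequality $|\xi|^4\le(1+|\xi|^2)^\alpha$ fails for $\alpha<2$. That shows the constant $1$ does not work, but it does not rule out $\|\Delta_\gamma u\|_{2,\gamma}\le C\|u\|_{H^\alpha_\gamma}$ with some larger $C$. Two things are needed: that $|\xi|^4(1+|\xi|^2)^{-\alpha}$ is unbounded, and test functions whose Hankel transforms are supported where it is large. Your shell bumps $f_k=\mathbf{F}_\gamma^{-1}\chi_k$ supply exactly this, giving a ratio of order $k^{4-2\alpha}$. So your version actually proves the equivalence, whereas the paper's argument as written establishes only the sufficiency.

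One small correction. Bounding the weight on $\operatorname{supp}\chi_k$ by $(1+(k+2)^2)^\alpha$ is valid only for $\alpha\ge0$, since the weight decreases in $|\xi|$ when $\alpha<0$. The paper defines $H^\alpha_\gamma$ for all real $\alpha$, so for $\alpha<0$ use $(1+(k-1)^2)^\alpha$ instead. In that range the spectral form of the norm comes directly from the multiplier definition of $(I-\Delta_\gamma)^{\frac{\alpha}{2}}$ and Plancherel, not from \eqref{PF1111}. The conclusion is unchanged.
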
 
\begin{proof}
Let us prove the inequality
	$$
	||\Delta_\gamma u||^2_{L^2_\gamma(\mathbb{R}^n_+)}\leq ||u||^2_{H^\alpha_\gamma(\mathbb{R}^n_+)},
	$$
	that is, the operator $\Delta_\gamma$ is a bounded linear operator from $H^\alpha_\gamma(\mathbb{R}^n_+)$ into $L^2_\gamma(\mathbb{R}^n_+)$.  
	
		Recall the Plancherel's identity \eqref{Plancherel}  for $L^2_\gamma(\mathbb{R}^n_+)$ and formula \eqref{LBH}, we obtain
$$
||\Delta_\gamma u||^2_{L^2_\gamma(\mathbb{R}^n_+)}=\int\limits_{\mathbb{R}_+^n} |\Delta_\gamma u(x)|^2 \, x^\gamma \, dx
	=
	\int\limits_{\mathbb{R}_+^n} |\mathbf{F}_\gamma[\Delta_\gamma u(x)](\xi)|^2 \, \xi^\gamma \, d\xi=	\int\limits_{\mathbb{R}_+^n} |\xi|^4\cdot |\mathbf{F}_\gamma[u](\xi)|^2 \, \xi^\gamma \, d\xi.
$$
	
If and only if $\alpha{\,\ge\,}2$, then for every $\xi{\,\in\,} \mathbb{R}^n_+$ the following inequality holds
	$$
		|\xi|^4 \leq (1 + |\xi|^2)^\alpha.
	$$
Thus,
	$$
	||\Delta_\gamma u||^2_{L^2_\gamma(\mathbb{R}^n_+)}
			\leq \int\limits_{\mathbb{R}^n_+} (1 + |\xi|^2)^\alpha |\mathbf{F}_\gamma[u](\xi)|^2 \, d\xi
		= \|u\|_{H^\alpha_\gamma(\mathbb{R}^n_+)}^2,
	$$
		which proves that $\Delta_\gamma{\,:\,}H^\alpha_\gamma(\mathbb{R}^n_+) \to L^2_\gamma(\mathbb{R}^n_+)$ is bounded if and only if $\alpha{\,\geq\,}2$.
\end{proof}
 
 We will consider functions from $H^\alpha_\gamma(\mathbb{R}^n_+)$, $\alpha{\,\geq\,}2$. In this case Laplace-Bessel operator is self-adjoint.
 Now we can introduce the inner product
\begin{equation}\label{InPr}
\langle u, v \rangle_{H^\alpha_{\gamma,\lambda}(\mathbb{R}^n_+)}
= \lambda \langle u, v \rangle_{H^\alpha_\gamma(\mathbb{R}^n_+)}
+ \langle \Delta_\gamma u, \Delta_\gamma v \rangle_{L^2_\gamma(\mathbb{R}^n_+)}, 
\end{equation}
the norm
$$
\|u \|_{H^\alpha_{\gamma,\lambda}(\mathbb{R}^n_+)}=\sqrt{\lambda\|u \|_{H^\alpha_{\gamma}(\mathbb{R}^n_+)}^2+\| \Delta_\gamma u\|_{L^2_\gamma(\mathbb{R}^n_+)}^2},
$$
and reproducing kernel
 $$
K^\gamma(\cdot, y) = (\lambda I + \Delta_\gamma^2) K_\lambda^\gamma(\cdot, y).
$$
From Theorem \ref{T2} we know that reproducing kernel in $H^\alpha_\gamma(\mathbb{R}^n_+)$ is $K^\gamma(\cdot, y){\,=\,}\,^\gamma\mathbf{T}^yG_{2\alpha}^\gamma$.
Therefore, in order to find $K_\lambda^\gamma(\cdot, y)$ we need to solve equation
$$
\,^\gamma\mathbf{T}^y_xG_{2\alpha}^\gamma(x) = (\lambda I + \Delta_\gamma^2) K_\lambda^\gamma(x, y).
$$
Applying the Hankel transform with respect to the variable $x$, and using \eqref{HanSdv}, \eqref{kernel} and \eqref{LBH}, we obtain
$$
\mathbf{F}_\gamma[\,^\gamma\mathbf{T}^y_xG_{2\alpha}^\gamma(x)](\xi) = \mathbf{F}_\gamma[(\lambda I + \Delta_\gamma^2) K_\lambda^\gamma(x, y)](\xi)\qquad \Rightarrow\qquad
$$
$$
\mathbf{j}_{\gamma}(y,\xi)\,(1+|\xi|^2)^{-\alpha}=(\lambda+|\xi|^4)\mathbf{F}_\gamma[K_\lambda^\gamma(x, y)](\xi)\qquad \Rightarrow\qquad
$$
$$
\mathbf{F}_\gamma[K_\lambda^\gamma(x, y)](\xi)=\frac{\mathbf{j}_{\gamma}(y,\xi)}{(1+|\xi|^2)^{\alpha}(\lambda+|\xi|^4)}\qquad \Rightarrow\qquad
$$
\begin{equation}\label{Ker}
K_\lambda^\gamma(x, y)=\frac{2^{\frac{n-|\gamma|}{2}}}{\prod\limits_{j=1}^n\,
	\Gamma\left(\frac{\gamma_j+1}{2}\right)}\int\limits_{\mathbb{R}^n_+}
\frac{\mathbf{j}_\gamma(x,\xi)\mathbf{j}_{\gamma}(y,\xi)}{(1+|\xi|^2)^{\alpha}(\lambda+|\xi|^4)}\xi^\gamma\:d\xi.
\end{equation}
 
Thus, we construct the RKHS $H^\alpha_{\gamma,\lambda}(\mathbb{R}^n_+)$ with inner product \eqref{InPr} and reproducing kernel \eqref{Ker}, which is adapted to the Laplace-Bessel operator $\Delta_\gamma$.
Taking the limit as $\lambda \to 0$ in $K_\lambda^\gamma(x, y)$, we recover the original kernel $K^\gamma(x, y)$:
$$
\lim_{\lambda \to 0} K_\lambda^\gamma(x, y) = K^\gamma(x, y).
$$

Tikhonov regularization is implemented in the following theorem from \cite{Matsuura}.

\begin{theorem}
Let $\mathcal{H}_K$, $L$, $\mathcal{H}$, $E$, and $K_\lambda$ be as in Theorem \ref{T3}. Then, for any $\lambda{\,>\,}0$ and for any $g{\,\in\,}\mathcal{H}$, the extremal function in
\begin{equation}\label{10}
\inf\limits_{f \in \mathcal{H}_K} \left\{ \lambda \| f \|_{\mathcal{H}_K}^2 + \| L f - g \|_{\mathcal{H}}^2 \right\}
\end{equation}
exists uniquely and the extremal function
\begin{equation}\label{11}
f_{\lambda, g}(p) = \langle g, L K_\lambda(\cdot, p) \rangle_{\mathcal{H}} 
\end{equation}
is the member of $\mathcal{H}_K$ which attains the infimum in \eqref{10}.
\end{theorem}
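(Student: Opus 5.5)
The plan is a Hilbert-space variational argument, followed by the reproducing property of $K_\lambda$ from Theorem \ref{T3}. Define on $\mathcal{H}_K$ the functional
$$
\Phi(f)=\lambda\|f\|_{\mathcal{H}_K}^2+\|Lf-g\|_{\mathcal{H}}^2 .
$$
Expanding the square gives
$$
\Phi(f)=\|f\|_{\mathcal{H}_{K_\lambda}}^2-2\,\mathrm{Re}\,\langle Lf,g\rangle_{\mathcal{H}}+\|g\|_{\mathcal{H}}^2 .
$$
The first term is exactly the norm of Theorem \ref{T3}. Since $L$ is bounded, the norms of $\mathcal{H}_K$ and $\mathcal{H}_{K_\lambda}$ are equivalent:
$$
\lambda\|f\|^2_{\mathcal{H}_K}\le\|f\|^2_{\mathcal{H}_{K_\lambda}}\le(\lambda+\|L\|^2)\|f\|^2_{\mathcal{H}_K}.
$$
Hence $\mathcal{H}_{K_\lambda}$ is complete; this is also contained in Theorem \ref{T3}. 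Moreover, $f\mapsto\langle Lf,g\rangle_{\mathcal{H}}$ is a bounded (conjugate-)linear functional on $\mathcal{H}_{K_\lambda}$.

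By the Riesz representation theorem there is a unique $f^*\in\mathcal{H}_{K_\lambda}$ with $\langle Lf,g\rangle_{\mathcal{H}}=\langle f,f^*\rangle_{\mathcal{H}_{K_\lambda}}$ for all $f$. Completing the square then gives
$$
\Phi(f)=\|f-f^*\|^2_{\mathcal{H}_{K_\lambda}}-\|f^*\|^2_{\mathcal{H}_{K_\lambda}}+\|g\|^2_{\mathcal{H}} .
$$
So the infimum in \eqref{10} is attained at $f=f^*$ and only there. This settles existence and uniqueness at once, with no compactness argument needed.

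It remains to identify $f^*$ pointwise, which gives \eqref{11}. Use the reproducing property of $K_\lambda$ in $\mathcal{H}_{K_\lambda}$ (Theorem \ref{T3}):
$$
f^*(p)=\langle f^*,K_\lambda(\cdot,p)\rangle_{\mathcal{H}_{K_\lambda}}=\overline{\langle K_\lambda(\cdot,p),f^*\rangle_{\mathcal{H}_{K_\lambda}}}=\overline{\langle LK_\lambda(\cdot,p),g\rangle_{\mathcal{H}}}=\langle g,LK_\lambda(\cdot,p)\rangle_{\mathcal{H}} .
$$
This is possible because $K_\lambda(\cdot,p)\in\mathcal{H}_{K_\lambda}=\mathcal{H}_K$ as sets, so $LK_\lambda(\cdot,p)$ is defined.

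The main point to check carefully is the conjugation bookkeeping in the complex case. An alternative that serves as a check is the normal equation $(\lambda I+L^*L)f^*=L^*g$ in $\mathcal{H}_K$. One obtains it by setting the derivative of $\Phi$ along $h$ to zero, and can then combine it with $K(\cdot,p)=(\lambda I+L^*L)K_\lambda(\cdot,p)$ and the self-adjointness of $\lambda I+L^*L$. In the real setting of this paper the conjugations are trivial, so either route gives \eqref{11} directly.
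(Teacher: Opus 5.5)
Your proof is correct and complete. The paper does not prove this statement: it quotes it from \cite{Matsuura} and uses it as a black box. So there is no internal argument to compare with, and your write-up supplies one.

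Your route has three steps. You rewrite the functional as $\|f\|^2_{\mathcal{H}_{K_\lambda}}-2\,\mathrm{Re}\,\langle Lf,g\rangle_{\mathcal{H}}+\|g\|^2_{\mathcal{H}}$. You represent $f\mapsto\langle Lf,g\rangle_{\mathcal{H}}$ by Riesz in $\mathcal{H}_{K_\lambda}$. Then you complete the square.

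This is equivalent to the usual best-approximation formulation of Tikhonov regularization in an RKHS. In $\mathcal{H}_K\times\mathcal{H}$ with the norm $\bigl(\lambda\|f\|^2_{\mathcal{H}_K}+\|h\|^2_{\mathcal{H}}\bigr)^{1/2}$, the functional in \eqref{10} is the squared distance from $(0,g)$ to the closed graph $\{(f,Lf):f\in\mathcal{H}_K\}$. The orthogonality condition for the projection onto that graph is exactly your identity $\langle f,f^*\rangle_{\mathcal{H}_{K_\lambda}}=\langle Lf,g\rangle_{\mathcal{H}}$.

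What your formulation buys is economy. Existence, uniqueness and the representation \eqref{11} all follow from the reproducing property of $K_\lambda$ alone. The operator identity $K(\cdot,p)=(\lambda I+L^*L)K_\lambda(\cdot,p)$ from Theorem \ref{T3} is never needed.

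Your normal-equation cross-check is the route that does use that identity, and it is also valid: $f^*(p)=\langle f^*,(\lambda I+L^*L)K_\lambda(\cdot,p)\rangle_{\mathcal{H}_K}=\langle L^*g,K_\lambda(\cdot,p)\rangle_{\mathcal{H}_K}=\langle g,LK_\lambda(\cdot,p)\rangle_{\mathcal{H}}$.
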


We are therefore ready to construct the Tikhonov regularized approximation of the solution to the singular Poisson equation as the best approximate solution in the space $H^\alpha_\gamma(\mathbb{R}^n_+)$.

\begin{theorem}
Let $n{\,\ge\,}1$ and $\alpha{\,\ge\,}2$. 
For any function $g{\,\in\,}L^2_\gamma(\mathbb{R}^n_+)$ and for any $\lambda{\,>\,}0$, the best approximate function 
$u_{\lambda,\alpha,g}^\gamma$ in the sense
$$
\inf_{u \in H^\alpha_\gamma(\mathbb{R}^n_+)} \left\{ \lambda \|u\|_{H^\alpha_\gamma(\mathbb{R}^n_+)}^2 + \|g - \Delta_\gamma u\|_{L^2_\gamma(\mathbb{R}^n_+)}^2 \right\}
= \lambda \|u_{\lambda,\alpha,g}^\gamma\|_{H^\alpha_\gamma}^2 + \|g - \Delta_\gamma u_{\lambda,\alpha,g}^\gamma\|_{L^2_\gamma(\mathbb{R}^n_+)}^2 
$$
exists uniquely and $F^{\delta}_{\delta, s, g}$ is represented by
\begin{equation}\label{44}
u_{\lambda,\alpha,g}^\gamma(x) = \int\limits_{\mathbb{R}^n_+} g(y) \,^\gamma \mathbf{T}_x^y Q_{\lambda, \alpha}^\gamma(x) \,y^\gamma dy 
\end{equation}
for
$$
Q_{\lambda,\alpha}^\gamma(x)=-\frac{2^{1-\frac{n+|\gamma|}{2}}}{\Gamma\left(\frac{n+|\gamma|}{2}\right)}  \int\limits_0^\infty 
\frac{j_{\frac{n+|\gamma|}{2}-1}(r|x|)}{(1+r^2)^{\alpha}(\lambda+r^4)}r^{n+|\gamma|+1}dr,
$$	
and as $\lambda{\,\to\,}0$,  $u_{\lambda,\alpha,g}^\gamma(x){\,\to\,}u(x)$ uniformly.
\end{theorem}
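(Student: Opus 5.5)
Existence and uniqueness of the minimizer come from the Tikhonov regularization theorem of \cite{Matsuura} stated above, applied with $\mathcal{H}_K=H^\alpha_\gamma(\mathbb{R}^n_+)$, $\mathcal{H}=L^2_\gamma(\mathbb{R}^n_+)$ and $L=\Delta_\gamma$. The hypotheses hold: Theorem \ref{T2} makes $H^\alpha_\gamma(\mathbb{R}^n_+)$ an RKHS with kernel $K^\gamma(\cdot,y)={}^\gamma\mathbf{T}^yG_{2\alpha}^\gamma$, and Lemma \ref{Lem1} makes $\Delta_\gamma$ bounded into $L^2_\gamma(\mathbb{R}^n_+)$ because $\alpha\ge2$. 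Theorem \ref{T3} then gives the kernel $K^\gamma_\lambda$ of $H^\alpha_{\gamma,\lambda}(\mathbb{R}^n_+)$, and this was computed explicitly in \eqref{Ker}. Formula \eqref{11} therefore yields
$$
u^\gamma_{\lambda,\alpha,g}(x)=\int\limits_{\mathbb{R}^n_+} g(y)\,(\Delta_\gamma)_y K^\gamma_\lambda(y,x)\,y^\gamma dy .
$$

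The second step is to put this into the form \eqref{44}. I apply $\Delta_\gamma$ under the integral sign in \eqref{Ker}; by \eqref{DelBes} this replaces the integrand by $-|\xi|^2\mathbf{j}_\gamma(x,\xi)\mathbf{j}_\gamma(y,\xi)/((1+|\xi|^2)^\alpha(\lambda+|\xi|^4))$. Differentiation under the integral is justified because $|\mathbf{j}_\gamma|\le1$ and the weight $|\xi|^2(1+|\xi|^2)^{-\alpha}(\lambda+|\xi|^4)^{-1}$ is integrable against $\xi^\gamma d\xi$ whenever $2\alpha+2>n+|\gamma|$, in particular for $\alpha$ large. Next, the product formula $\mathbf{j}_\gamma(x,\xi)\mathbf{j}_\gamma(y,\xi)={}^\gamma\mathbf{T}^y_x\mathbf{j}_\gamma(x,\xi)$ lets me pull the generalized translation outside, so $(\Delta_\gamma)_yK^\gamma_\lambda(y,x)={}^\gamma\mathbf{T}^y_xQ(x)$, where
$$
Q(x)=-c_{n,\gamma}\int\limits_{\mathbb{R}^n_+}\frac{|\xi|^2\,\mathbf{j}_\gamma(x,\xi)}{(1+|\xi|^2)^{\alpha}(\lambda+|\xi|^4)}\,\xi^\gamma d\xi .
$$
Passing to polar coordinates $\xi=r\theta$ and using the spherical formula \eqref{JBes} collapses the angular integral to $j_{\frac{n+|\gamma|}{2}-1}(r|x|)$ and produces the power $r^{n+|\gamma|+1}$. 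Collecting the constant $2^{\frac{n-|\gamma|}{2}}\prod\Gamma^{-1}$ times $\prod\Gamma/(2^{n-1}\Gamma(\frac{n+|\gamma|}{2}))$ gives $2^{1-\frac{n+|\gamma|}{2}}/\Gamma(\frac{n+|\gamma|}{2})$, which is exactly $Q^\gamma_{\lambda,\alpha}$.

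The last step is uniform convergence as $\lambda\to0$, where $u$ denotes a solution of \eqref{Pois} lying in $H^\alpha_\gamma(\mathbb{R}^n_+)$; I assume such a solution exists. On the Hankel side, $\widehat{u_\lambda}(\xi)=-|\xi|^2\hat g(\xi)/(\lambda(1+|\xi|^2)^\alpha+|\xi|^4)$, and with $\hat g=-|\xi|^2\hat u$ this gives
$$
\widehat{u}-\widehat{u_\lambda}=\frac{\lambda(1+|\xi|^2)^\alpha}{\lambda(1+|\xi|^2)^\alpha+|\xi|^4}\,\hat u .
$$
The inverse transform formula together with $|\mathbf{j}_\gamma|\le1$ gives
$$
\sup_x|u-u_\lambda|\le C\int\limits_{\mathbb{R}^n_+}\frac{\lambda(1+|\xi|^2)^\alpha}{\lambda(1+|\xi|^2)^\alpha+|\xi|^4}\,|\hat u(\xi)|\,\xi^\gamma d\xi .
$$
Cauchy--Schwarz bounds the integrand by $(1+|\xi|^2)^{\alpha/2}|\hat u|\cdot(1+|\xi|^2)^{-\alpha/2}$, and this majorant is integrable when $2\alpha>n+|\gamma|$, the RKHS threshold. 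The multiplier is at most $1$ and tends to $0$ pointwise for $\xi\ne0$, so dominated convergence finishes the argument. Equivalently, the bound is $|u(x)-u_\lambda(x)|\le\|u-u_\lambda\|_{H^\alpha_\gamma}\sqrt{K^\gamma(x,x)}$, and $K^\gamma(x,x)$ is bounded by its value at $0$.

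I expect this final step to be the main obstacle. The theorem as stated does not say which $u$ is meant, and it assumes only $\alpha\ge2$ rather than $\alpha>\frac{n+|\gamma|}{2}$. So I will state the assumption that $u\in H^\alpha_\gamma$ explicitly and use the integrability threshold wherever the pointwise kernel bound is needed. I will also justify the interchanges of limits and integrals in the $\varepsilon$-regularized definition \eqref{PF1111}, by working throughout with Hankel multipliers.
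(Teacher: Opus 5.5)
\textbf{The gap: your second and third steps describe two different functions.} Your first two steps follow the paper's proof: Matsuura's theorem with $L=\Delta_\gamma$, the kernel \eqref{Ker}, then \eqref{DelBes}, the product formula and \eqref{JBes}. The paper gives no argument for the convergence claim, so your third step goes beyond it. Your added hypotheses ($u\in H^\alpha_\gamma$ solving \eqref{Pois}, and $2\alpha>n+|\gamma|$) are genuinely needed.

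By \eqref{Ker}, the function \eqref{44} with the displayed $Q^\gamma_{\lambda,\alpha}$ has Hankel transform, up to normalising constants, $-|\xi|^2\hat g(\xi)/\bigl((1+|\xi|^2)^\alpha(\lambda+|\xi|^4)\bigr)$. In the convergence step you instead use $-|\xi|^2\hat g(\xi)/\bigl(\lambda(1+|\xi|^2)^\alpha+|\xi|^4\bigr)$. The latter is the actual minimiser: minimise $\lambda(1+|\xi|^2)^\alpha|\hat u|^2+|\hat g+|\xi|^2\hat u|^2$ pointwise in $\xi$. So what fails is \eqref{Ker}, which you import without checking.

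\textbf{Where \eqref{Ker} goes wrong.} It comes from solving $K^\gamma=(\lambda I+\Delta_\gamma^2)K^\gamma_\lambda$. But in Theorem \ref{T3}, $L^*$ is the adjoint of $L\colon H^\alpha_\gamma(\mathbb{R}^n_+)\to L^2_\gamma(\mathbb{R}^n_+)$. From $\langle\Delta_\gamma u,v\rangle_{L^2_\gamma}=\langle u,L^*v\rangle_{H^\alpha_\gamma}$ one gets $L^*=(I-\Delta_\gamma)^{-\alpha}\Delta_\gamma$. Hence $L^*L$ has symbol $|\xi|^4(1+|\xi|^2)^{-\alpha}$, not $|\xi|^4$. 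Equivalently, the inner product \eqref{InPr} has symbol $\lambda(1+|\xi|^2)^\alpha+|\xi|^4$, so its kernel satisfies $\mathbf{F}_\gamma[K^\gamma_\lambda(\cdot,y)](\xi)\propto\mathbf{j}_\gamma(y,\xi)/\bigl(\lambda(1+|\xi|^2)^\alpha+|\xi|^4\bigr)$.

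\textbf{Consequence.} The statement cannot be proved as displayed. With the given $Q^\gamma_{\lambda,\alpha}$ and $\hat g=-|\xi|^2\hat u$, the multiplier acting on $\hat u$ is $|\xi|^4/\bigl((1+|\xi|^2)^\alpha(\lambda+|\xi|^4)\bigr)$. This tends to $(1+|\xi|^2)^{-\alpha}$, so $u_\lambda\to(I-\Delta_\gamma)^{-\alpha}u\neq u$ (up to a constant). Moreover, \eqref{44} with that $Q$ is not even the minimiser.

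\textbf{Repair.} Redo your second step with the corrected kernel, which replaces $(1+r^2)^\alpha(\lambda+r^4)$ by $\lambda(1+r^2)^\alpha+r^4$ in $Q^\gamma_{\lambda,\alpha}$. Your third step is then correct and consistent with it, whether you use dominated convergence or the bound via $\sqrt{K^\gamma(x,x)}$.

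Your justification for differentiating under the integral must also change. With the corrected denominator, the $r$-integral converges absolutely at $x=0$ only when $2\alpha>n+|\gamma|+2$. Otherwise, read \eqref{44} as the pairing $\langle g,\Delta_\gamma K^\gamma_\lambda(\cdot,x)\rangle_{L^2_\gamma}$. That needs only $K^\gamma_\lambda(\cdot,x)\in H^\alpha_\gamma$, i.e. $2\alpha>n+|\gamma|$.
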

\begin{proof}
In Lemma \ref{Lem1}, it was proved that $\Delta_\gamma : H^\alpha_\gamma(\mathbb{R}^n_+) \to L^2_\gamma(\mathbb{R}^n_+)$ is bounded if and only if $\alpha \ge 2$. The kernel $K_\lambda^\gamma(x, y)$ was obtained in \eqref{Ker} and is given by
	$$
	 K_\lambda^\gamma(x, y)=\frac{2^{\frac{n-|\gamma|}{2}}}{\prod\limits_{j=1}^n\,
	 	\Gamma\left(\frac{\gamma_j+1}{2}\right)}\int\limits_{\mathbb{R}^n_+}
	 \frac{\,^\gamma\mathbf{T}^y_x\mathbf{j}_\gamma(x,\xi)}{(1+|\xi|^2)^{\alpha}(\lambda+|\xi|^4)}\xi^\gamma\:d\xi.
	$$
	So, we can  find $u_{\lambda,\alpha,g}^\gamma$ by formulas \eqref{11} and \eqref{JBes}:
\begin{multline*}
	u_{\lambda,\alpha,g}^\gamma(x)=\int\limits_{\mathbb{R}^n_+} g(y)(\Delta_\gamma)_y K_\lambda^\gamma(x, y)\cdot y^\gamma\:dy=\\
	=-\frac{2^{\frac{n-|\gamma|}{2}}}{\prod\limits_{j=1}^n\,
		\Gamma\left(\frac{\gamma_j+1}{2}\right)}\int\limits_{\mathbb{R}^n_+} g(y) \left(\,\, \int\limits_{\mathbb{R}^n_+}
		\frac{|\xi|^2\,^\gamma\mathbf{T}^y_x\mathbf{j}_\gamma(x,\xi)}{(1+|\xi|^2)^{\alpha}(\lambda+|\xi|^4)}\xi^\gamma\:d\xi \right) y^\gamma\:dy=\{\xi=r\theta\}=\\
			=-\frac{2^{\frac{n-|\gamma|}{2}}}{\prod\limits_{j=1}^n\,
				\Gamma\left(\frac{\gamma_j+1}{2}\right)}\int\limits_{\mathbb{R}^n_+} g(y)\,^\gamma\mathbf{T}^y_x \left(\,\, \int\limits_0^\infty 
				\frac{r^{n+|\gamma|+1}dr}{(1+r^2)^{\alpha}(\lambda+r^4)}
				\int\limits_{S_1^+(n)}
			\mathbf{j}_\gamma(x,r\theta)\theta^\gamma\:dS \right) y^\gamma\:dy=\\
			=-\frac{2^{1-\frac{n+|\gamma|}{2}}}{\Gamma\left(\frac{n+|\gamma|}{2}\right)}\int\limits_{\mathbb{R}^n_+} g(y)\,^\gamma\mathbf{T}^y_x \left(\,\, \int\limits_0^\infty 
			\frac{j_{\frac{n+|\gamma|}{2}-1}(r|x|)}{(1+r^2)^{\alpha}(\lambda+r^4)}r^{n+|\gamma|+1}dr
			  \right) y^\gamma\:dy.
\end{multline*}
That gives \eqref{44}.
\end{proof}

\section{acknowledgments}
This research was supported by the Russian Science Foundation (project No. 25-21-00830).

\end{document}